\newtheorem{theorem}{Theorem}[section]
\newtheorem{lemma}[theorem]{Lemma}
\newtheorem{proposition}[theorem]{Proposition}
\theoremstyle{definition}
\newtheorem{definition}[theorem]{Definition}
\newtheorem{example}[theorem]{Example}
\theoremstyle{remark}
\newtheorem{remark}[theorem]{Remark}
\begin{document}

\title{Rota-Baxter Coalgebras}

\author{Run-Qiang Jian}
%    Address of record for the research reported here
\address{School of Computer Science, Dongguan University
of Technology, 1, Daxue Road, Songshan Lake, 523808, Dongguan, P.
R. China}
%    Current address
%\curraddr{}
\email{jianrq@dgut.edu.cn}

\author{Jiao Zhang${}^{*}$}
%    Address of record for the research reported here
\address{Department of Mathematics, Shanghai University, 99 Shangda Road, BaoShan District, 200444, Shanghai, P.R. China.}
%    Current address
%\curraddr{}
\email{zhangjiao@shu.edu.cn}
%    \thanks will become a 1st page footnote.
\thanks{${}^{*}$Corresponding author. E-mail address: zhangjiao@shu.edu.cn}

%    General info
\subjclass[2010]{Primary 16W30; Secondary 16W99}

\date{}

\dedicatory{}

\keywords{Rota-Baxter algebras; Rota-Baxter coalgebras; Finite
duals; Double coproducts; Rota-Baxter comodules}

\begin{abstract}
We introduce the notion of Rota-Baxter coalgebra which can be viewed as the dual notion of Rota-Baxter algebra. We provide some concrete examples and establish various properties of this new object. We also consider comodules over Rota-Baxter coalgebras.
\end{abstract}

\maketitle

\section{Introduction}

A Rota-Baxter algebra of weight $\lambda$ is a pair $(R,P)$
consisting of an associative algebra $R$ and an endomorphism $P$
of $R$ such that $$P(x)P(y)=P(xP(y))+P(P(x)y)+\lambda P(xy),\ \
\forall x,y\in R.$$ They are first initiated by the work of G.
Baxter on probability theory (\cite{Bax}) and formulated formally
by Rota (\cite{Rot}) in 1960s. Later, even though some famous
mathematicians, such as Cartier, studied these new subjects, but
they did not draw people's attention extensively. Things are
changed in 2000 after the works \cite{G} and \cite{GK}. From then
on, these algebras become popular. Nowadays, except for their own
interests (see e.g., \cite{CG}), they have many applications in
other areas of mathematics and mathematical physics, such as
combinatorics (\cite{EGP}), Loday type algebras (\cite{E},
\cite{EG2}), pre-Lie and pre-Poisson algebras (\cite{LHB},
\cite{AB}, \cite{A}), multiple zeta values (\cite{EG3},
\cite{GZ}), and Connes-Kreimer renormalization theory in quantum
field theory (\cite{CK1}, \cite{CK2}, \cite{EGK1}, \cite{EGK2}). For a detailed
description of the theory of Rota-Baxter algebras, we refer the
reader to the clearly written book \cite{Guo}.

Stimulating by the theory of coalgebras, it seems reasonable to
consider the dual theory of Rota-Baxter algebras. The significance
of studying dual theories is that they do not only bring us to new
phenomena but also help us to understand the original subjects
much better. The purpose of this paper is intended to investigate
the dual theory of Rota-Baxter algebras. We first define the
notion of Rota-Baxter coalgebra. It can be viewed as the dual
version of Rota-Baxter algebras. An active mathematical subject should have certainly abundant examples. So we provide various examples of
Rota-Baxter coalgebras, including constructions by group-like
elements, by wedge products, and by smash coproducts. It is a direct and easy
verification that the dual coalgebra of a Rota-Baxter algebra
together with the linear dual of the Rota-Baxter operator is a
Rota-Baxter coalgebra. But the converse is not true in general.
One of the troubles is that the linear dual of an algebra is not a
coalgebra unless it is finite dimensional. In order to overcome
this difficulty, we restrict our attention to the finite dual of
an algebra. To make the converse statement hold, we also have to
impose some relevant conditions on the original algebra structure
that relate to the so-called double product. We study the relation
between Rota-Baxter coalgebras with counit and those without
counit in the idempotent case as well. On the other hand, we want
to establish the properties of Rota-Baxter coalgebras having dual
versions in Rota-Baxter algebras. For example, we construct the
double coproduct which is dual to the double product, and we
provide necessary and sufficient conditions for a Rota-Baxter
coalgebra being idempotent. We provide a dual version of Atkinson's additive decomposition as well. Finally, inspired by the work \cite{GL}, we introduce the notion of Rota-Baxter comodules over Rota-Baxter coalgebras. We establish the relations between these comodules and Rota-Baxter modules over Rota-Baxter algebras.

This paper is organized as follows. In Section 2, we introduce the
notion of Rota-Baxter coalgebra and provide several concrete
examples. In Section 3, we establish various properties of
Rota-Baxter coalgebras. In the last section, we define and study Rota-Baxter comodules over Rota-Baxter coalgebras.

\section{Rota-Baxter coalgebras}

Throughout this paper, we fix a ground field $\mathbb{K}$ of
characteristic 0 and assume that all vector spaces, algebras,
coalgebras and tensor products are defined over $\mathbb{K}$.

We adopt Sweedler's notation for coalgebras and comodules. Let
$(C,\Delta,\varepsilon)$ be a coalgebra. For any $c\in C$, we
denote
$$\Delta(c)=\sum c_{(1)}\otimes  c_{(2)}.$$ Let $(M,\delta)$ be a left $C$-comodule. For any $ m\in
M$, we denote
$$\delta(m)=\sum m_{(-1)}\otimes  m_{(0)}.$$

\begin{definition}A \emph{Rota-Baxter coalgebra} of weight $\lambda$ is a coalgebra $(C,\Delta, \varepsilon)$ equipped with an operator $P$ such that
\begin{equation}
  \label{RBO1}(P\otimes P) \Delta=(\mathrm{id}\otimes P)  \Delta   P+(P\otimes \mathrm{id})  \Delta   P+\lambda \Delta  P.
\end{equation}
In Sweedler's notation, this means
\begin{align*}
  &\sum P(c_{(1)})\otimes P(c_{(2)})\\
  &=\sum P(c)_{(1)}\otimes P(P(c)_{(2)})+\sum P(P(c)_{(1)})\otimes
P(c)_{(2)}+\lambda\sum P(c)_{(1)}\otimes P(c)_{(2)}.
\end{align*}
We always use the quadruple $(C,\Delta, \varepsilon,P)$ to denote a Rota-Baxter coalgebra. If $(C,\Delta)$ is a noncounitary coalgebra equipped with  an endomorphism $P$ which satisfies equation \eqref{RBO1} above, we call the triple $(C,\Delta, P)$ a \emph{noncounitary Rota-Baxter coalgebra}.

A Rota-Baxter coalgebra is called \emph{idempotent} if the
Rota-Baxter operator $P$ satisfies $P^2=P$.

A subcoalgebra $D$ of $C$ is called a \emph{Rota-Baxter subcoalgebra} of $C$ if $P(D)\subset D$.\end{definition}

\begin{remark}(1) If $(C,\Delta, P)$ is a (noncounitary) Rota-Baxter coalgebra of weight $\lambda\neq 0$, then it is a direct verification that $(C,\Delta, \mu \lambda^{-1}P)$ is a (noncounitary) Rota-Baxter coalgebra of weight $\mu$.

(2) If $(C,\Delta, \varepsilon,P)$ is a Rota-Baxter coalgebra,
then its dual $(C^\ast,\Delta^\ast, \varepsilon^\ast,P^\ast)$ is a
unitary Rota-Baxter algebra. Similarly, the dual of a noncounitary
Rota-Baxter coalgebra is a nonunitary Rota-Baxter algebra.

(3) Every Rota-Baxter subcoalgebra of a Rota-Baxter coalgebra is itself a Rota-Baxter coalgebra.
\end{remark}

\begin{proposition}\label{Idempotent RB coalgebras}Let $(C,\Delta,\varepsilon)$ be a coalgebra. Assume that $P:C\rightarrow C$ is a linear map such that $\Delta  P=(P\otimes P)  \Delta $ and $P^2=P$. Then $(C,\Delta,\varepsilon,P)$ is an idempotent Rota-Baxter coalgebra of weight $-1$.\end{proposition}\begin{proof} Indeed, for any $c\in C$, we have\begin{align*}
\lefteqn{(P\otimes \mathrm{id})  \Delta  P(c)+(\mathrm{id}\otimes P)  \Delta  P(c)-\Delta  P(c)}\\[3pt]
&=\sum P^2(c_{(1)})\otimes P(c_{(2)})+\sum  P(c_{(1)})\otimes P^2(c_{(2)})-\sum P(c_{(1)})\otimes P(c_{(2)}) \\[3pt]
&=\sum P(c_{(1)})\otimes P(c_{(2)})+\sum P(c_{(1)})\otimes P(c_{(2)})-\sum P(c_{(1)})\otimes P(c_{(2)})\\[3pt]
&=\sum P(c_{(1)})\otimes P(c_{(2)})\\[3pt]
&=(P\otimes P)  \Delta (c).
\end{align*}\end{proof}

\begin{example}Let $(C,\Delta,\varepsilon)$ be a coalgebra. Given a group-like element $g\in C$, we define $P_g:C\rightarrow
C$ by $P_g(c)=\varepsilon(c)g$ for any $c\in C$. Since
$\Delta(g)=g\otimes g$ and $\varepsilon (g)=1$, we
have\begin{align*}
 \Delta  P_g(c)&=\Delta(\varepsilon(c)g)\\[3pt]
&=\varepsilon(c)g \otimes g\\[3pt]
&=\varepsilon(\sum \varepsilon (c_{(1)})c_{(2)})g \otimes g\\[3pt]
&=\sum \varepsilon(c_{(1)})g \otimes \varepsilon (c_{(2)})g\\[3pt]
&=\sum P_g(c_{(1)}) \otimes P_g(c_{(2)}) \\[3pt]
&=(P_g\otimes P_g)  \Delta (c),
\end{align*}and
\begin{align*}
P_g^2(c)&=P_g(\varepsilon(c)g)\\[3pt]
&=\varepsilon(c)\varepsilon(g)g\\[3pt]
&=\varepsilon(c)g\\[3pt]
&=P_g(c).
\end{align*}Hence $(C,\Delta,\varepsilon,P_g)$ is an idempotent Rota-Baxter coalgebra of weight $-1$.
\end{example}

The condition $\Delta  P=(P\otimes P)  \Delta $ says that $P$ is a morphism of coalgebra. Here we do not require the compatibility of $P$ and the counit. This condition is natural and useful.

Let $(C,\Delta,\varepsilon)$ be a coalgebra, and $X$ and $Y$ be two subcoalgebras. The wedge product $X\wedge Y$ of $X$ and $Y$ is defined to be $$X\wedge Y=\Delta^{-1}(X\otimes C+C\otimes Y).$$ It is a subcoalgebra of $C$ (see Proposition 9.0.0 in \cite{Sw}).

\begin{proposition}Let $(C,\Delta,\varepsilon,P)$ be a Rota-Baxter coalgebra such that $\Delta  P=(P\otimes P)  \Delta $. For any two Rota-Baxter subcoalgebras $X$ and $Y$, their wedge product $X\wedge Y$ is again a Rota-Baxter subcoalgebra of $C$.\end{proposition}
\begin{proof}The only thing need to prove is that $P(X\wedge Y)\subset X\wedge Y$. For any $c\in X\wedge Y$, we have $\Delta(c)\subset X\otimes C+C\otimes Y$. It follows from $\Delta  P=(P\otimes P)  \Delta $ that $$\Delta  P(c)=(P\otimes P)  \Delta(c)\subset  P(X)\otimes P(C)+P(C)\otimes P(Y)\subset  X\otimes C+C\otimes Y ,$$as desired.\end{proof}

Let $(H,\Delta,\varepsilon,S)$ be a Hopf algebra. Define the left $H$-comodule structure $\delta$ on $H$ by $\delta(h)=\sum h_{(1)}S(h_{(3)})\otimes h_{(2)}$. Then $H$ is a comodule-coalgebra. So we have the smash coalgebra strucrure on $H\otimes H$ given by $$\Delta_s(a\otimes b)=\sum a_{(1)}\otimes a_{(2)}S(a_{(4)})b_{(1)}\otimes a_{(3)}\otimes b_{(2)},$$ and $$\varepsilon_s(a\otimes b)=\varepsilon(a)\varepsilon(b).$$ We define endomorphisms $P_1$ and $P_2$ of $H\otimes H$ respectively by $$P_1(a\otimes b)=a\otimes \varepsilon(b)1_H,$$ and $$P_2(a\otimes b)=\sum
S(a_{(2)}S(a_{(4)})b_{(2)})a_{(3)}\otimes
S(a_{(1)}S(a_{(5)})b_{(1)})b_{(3)}.$$ Then we have

\begin{proposition}The quadruples $(H\otimes H, \Delta_s,\varepsilon_s,P_1)$ and $(H\otimes H, \Delta_s,\varepsilon_s,P_1)$ are idempotent Rota-Baxter coalgebras of weight $-1$. \end{proposition}
\begin{proof}Indeed, we have
\begin{align*}
\lefteqn{(\mathrm{id}\otimes P_1)  \Delta_s   P_1(a\otimes
b)+(P_1\otimes
\mathrm{id})  \Delta_s   P_1(a\otimes b)- \Delta_s  P_1(a\otimes b)}\\[3pt]
&=(\mathrm{id}\otimes P_1)  \Delta_s   (a\otimes\varepsilon(b)
1_H)+(P_1\otimes
\mathrm{id})  \Delta_s   (a\otimes\varepsilon(b) 1_H)- \Delta_s  (a\otimes\varepsilon(b) 1_H)\\[3pt]
&=\varepsilon(b)\sum a_{(1)}\otimes a_{(2)}S(a_{(4)})\otimes P_1(a_{(3)}\otimes 1_H)\\[3pt]
&\ \ \ +\varepsilon(b)\sum P_1(a_{(1)}\otimes a_{(2)}S(a_{(4)}))\otimes a_{(3)}\otimes 1_H\\[3pt]
&\ \ \ -\varepsilon(b)\sum a_{(1)}\otimes a_{(2)}S(a_{(4)})\otimes a_{(3)}\otimes 1_H\\[3pt]
&=\varepsilon(b)\sum a_{(1)}\otimes a_{(2)}S(a_{(4)})\otimes a_{(3)}\otimes 1_H\\[3pt]
&\ \ \ +\varepsilon(b)\sum a_{(1)}\otimes \varepsilon(a_{(2)})\varepsilon(a_{(4)})1_H\otimes a_{(3)}\otimes 1_H\\[3pt]
&\ \ \ -\varepsilon(b)\sum a_{(1)}\otimes a_{(2)}S(a_{(4)})\otimes a_{(3)}\otimes 1_H\\[3pt]
&=\varepsilon(b)\sum a_{(1)}\otimes \varepsilon(a_{(2)})\varepsilon(a_{(4)})1_H\otimes a_{(3)}\otimes 1_H\\[3pt]
&=\sum a_{(1)}\otimes \varepsilon(a_{(2)})\varepsilon(a_{(4)})\varepsilon(b_{(1)})1_H\otimes a_{(3)}\otimes \varepsilon(b_{(2)})1_H\\[3pt]
&=\sum P_1(a_{(1)}\otimes a_{(2)}S(a_{(4)})b_{(1)})\otimes P_1(a_{(3)}\otimes b_{(2)})\\[3pt]
&=(P_1\otimes P_1) \Delta_s(a\otimes b).
\end{align*} Obviously, $P_1^2=P_1$.

On the other hand, the space $H\otimes H$ is a left $H$-module via the diagonal
action, i.e., $h.(a\otimes b)=\sum h_{(1)}a\otimes h_{(2)}b$, for
any $a,b,h\in H$. Then the action of $P_2$ can be rewritten as
$$P_2(a\otimes b)=\sum S(a_{(1)}S(a_{(3)})b_{(1)}).(a_{(2)}\otimes
b_{(2)}).$$

We first show that
\begin{align*}
P_2(h.(a\otimes b))&=\varepsilon(h)P_2(a\otimes b),\\[3pt]
\Delta_s(h.(a\otimes b))&=h. \Delta_s(a\otimes b).
\end{align*}
Notice that
\begin{align*}
P_2(h.(a\otimes b))&=\sum P_2(h_{(1)}a\otimes h_{(2)}b)\\ &=\sum S(h_{(1)}a_{(1)}S(h_{(3)}a_{(3)})h_{(4)}b_{(1)}).(h_{(2)}a_{(2)}\otimes h_{(5)}b_{(2)})\\[3pt]
&=\sum S(h_{(1)}a_{(1)}S(a_{(3)})b_{(1)}).(h_{(2)}a_{(2)}\otimes h_{(3)}b_{(2)})\\[3pt]
&=\sum (S(h_{(1)}a_{(1)}S(a_{(3)})b_{(1)})h_{(2)}).(a_{(2)}\otimes b_{(2)})\\[3pt]
&=\sum \varepsilon(h)S(a_{(1)}S(a_{(3)})b_{(1)}).(a_{(2)}\otimes b_{(2)})\\[3pt]
&=\varepsilon(h)P_2(a\otimes b).
\end{align*} So we get the first equation.
For the second one,
\begin{align*}
\Delta_s(h.(a\otimes b))&= \Delta_s(\sum h_{(1)}a\otimes h_{(2)}b)\\[3pt]
&=\sum h_{(1)}a_{(1)}\otimes h_{(2)}a_{(2)}S(h_{(4)}a_{(4)})h_{(5)}b_{(1)}\otimes h_{(3)}a_{(3)}\otimes h_{(6)}b_{(2)}\\[3pt]
&=\sum h_{(1)}a_{(1)}\otimes h_{(2)}a_{(2)}S(a_{(4)})b_{(1)}\otimes h_{(3)}a_{(3)}\otimes h_{(4)}b_{(2)}\\[3pt]
&=h. \sum a_{(1)}\otimes a_{(2)}S(a_{(4)})b_{(1)}\otimes a_{(3)}\otimes b_{(2)}\\[3pt]
&=h.\Delta_s(a\otimes b).
\end{align*}
Then we have
\begin{align*}
(P_2\otimes P_2)\Delta_s(a\otimes b)&=\sum P_2(a_{(1)}\otimes a_{(2)}S(a_{(4)})b_{(1)})\otimes P_2(a_{(3)}\otimes b_{(2)})\\[3pt]
&=\sum P_2(a_{(1)}.(1_H\otimes S(a_{(3)})b_{(1)}))\otimes P_2(a_{(2)}\otimes b_{(2)})\\[3pt]
&=\sum \varepsilon(a_{(1)})P_2(1_H\otimes S(a_{(3)})b_{(1)})\otimes P_2(a_{(2)}\otimes b_{(2)})\\[3pt]
&=\sum P_2(1_H\otimes S(a_{(2)})b_{(1)})\otimes P_2(a_{(1)}\otimes b_{(2)})\\[3pt]
&=\sum S(S(a_{(3)})b_{(1)}).(1_H\otimes S(a_{(2)})b_{(2)})\otimes
P_2(a_{(1)}\otimes b_{(3)}),
\end{align*}
and \begin{align*}
&\Delta_sP_2(a\otimes b)\\[3pt]
&=\Delta_s(\sum S(a_{(1)}S(a_{(3)})b_{(1)}).(a_{(2)}\otimes b_{(2)}))\\[3pt]
&=\sum S(a_{(1)}S(a_{(3)})b_{(1)}).\Delta_s (a_{(2)}\otimes b_{(2)}))\\[3pt]
&=\sum S(a_{(1)}S(a_{(6)})b_{(1)}).(a_{(2)}\otimes a_{(3)}S(a_{(5)})b_{(2)}\otimes a_{(4)}\otimes b_{(3)})\\[3pt]
&=\sum S(a_{(2)}S(a_{(7)})b_{(2)}).(a_{(3)}\otimes a_{(4)}S(a_{(6)})b_{(3)})\otimes S(a_{(1)}S(a_{(8)})b_{(1)}).(a_{(5)}\otimes b_{(4)})\\[3pt]
&=\sum (S(a_{(2)}S(a_{(6)})b_{(2)})a_{(3)}).(1_H\otimes S(a_{(5)})b_{(3)})\otimes S(a_{(1)}S(a_{(7)})b_{(1)}).(a_{(4)}\otimes b_{(4)})\\[3pt]
&=\sum S(S(a_{(4)})b_{(2)}).(1_H\otimes S(a_{(3)})b_{(3)})\otimes
S(a_{(1)}S(a_{(5)})b_{(1)}).(a_{(2)}\otimes b_{(4)}).
\end{align*}So
\begin{align*}
&( P_2\otimes\mathrm{id})\Delta_sP_2(a\otimes b)\\[3pt]
&= \sum \varepsilon (S(a_{(4)})) \varepsilon (b_{(2)})P_2(1_H\otimes S(a_{(3)})b_{(3)})\otimes S(a_{(1)}S(a_{(5)})b_{(1)}).(a_{(2)}\otimes b_{(4)})\\[3pt]
&=\sum P_2( 1_H\otimes S(a_{(3)})b_{(2)})\otimes S(a_{(1)}S(a_{(4)})b_{(1)}).(a_{(2)}\otimes b_{(3)})\\[3pt]
&=\sum S(S(a_{(4)})b_{(2)}).(1_H\otimes S(a_{(3)})b_{(3)})\otimes S(a_{(1)}S(a_{(5)})b_{(1)}).(a_{(2)}\otimes b_{(4)})\\[3pt]
&=\Delta_sP_2(a\otimes b),
\end{align*}and
\begin{align*}
&(\mathrm{id}\otimes P_2)\Delta_sP_2(a\otimes b) \\[3pt]
&=\sum S(S(a_{(4)})b_{(2)}).(1_H\otimes S(a_{(3)})b_{(3)})\otimes \varepsilon(a_{(1)}S(a_{(5)})b_{(1)})P_2(a_{(2)}\otimes b_{(4)})\\[3pt]
&=\sum S(S(a_{(3)})b_{(1)}).(1_H\otimes S(a_{(2)})b_{(2)})\otimes P_2(a_{(1)}\otimes b_{(3)})\\[3pt]
&=(P_2\otimes P_2)\Delta_s(a\otimes b).
\end{align*}
Therefore
$$(P_2\otimes P_2)\Delta_s(a\otimes b)=
(\mathrm{id}\otimes
P_2+P_2\otimes\mathrm{id}-\mathrm{id}\otimes\mathrm{id})\Delta_sP_2(a\otimes
b) $$ holds for any $a,b\in H$.

Finally,
\begin{align*}
P_2^2(a\otimes b)&=P_2(\sum
S(a_{(1)}S(a_{(3)})b_{(1)}).(a_{(2)}\otimes
b_{(2)}))\\[3pt]
&=\sum \varepsilon(S(a_{(1)}S(a_{(3)})b_{(1)}))P_2(a_{(2)}\otimes
b_{(2)})\\[3pt]
&=\sum
\varepsilon(a_{(1)})\varepsilon(a_{(3)})\varepsilon(b_{(1)})P_2(a_{(2)}\otimes
b_{(2)})\\[3pt]
&=P_2(a\otimes b).
\end{align*}\end{proof}

\begin{remark}We mention here that the operators $P_1$ and $P_2$ above are not of the
type in Proposition \ref{Idempotent RB coalgebras} since they do not satisfy $\Delta  P=(P\otimes
P)  \Delta $.\end{remark}

All constructions given above are idempotent. Now we provide one which
is not in such a case.

\begin{example}\label{Non-idempotent RB coalgebra}
Let $C$ be a $\mathbb{K}$-vector space with a basis
$\{c_n\}_{n\geq 0}$. Define a $\mathbb{K}$-linear map $\Delta:
C\rightarrow C\otimes C$ by
$$\Delta(c_n)=\sum_{j=0}^{n}\sum_{i=n-j}^{n}(-1)^{i+j+n}
\binom{n}{j}\binom{j}{n-i}c_i\otimes c_j,$$ and a
$\mathbb{K}$-linear map $\varepsilon:C\rightarrow \mathbb{K}$ by
$$\varepsilon(c_n)=\begin{cases}
  1,&\text{if }n=0,\\[3pt]
  0,&\text{if }n>0.
\end{cases}$$
Let $c_{-1}=0$. We define a $\mathbb{K}$-linear map
$P:C\rightarrow C$ by
 $P(c_n)=c_{n-1}$.
Then $(C,\Delta,\varepsilon,P)$ is a Rota-Baxter coalgebra of
weight $-1$. The verification is given in the
appendix.\end{example}

\section{Basic properties}

We first discuss the relations between Rota-Baxter algebras and
Rota-Baxter coalgebras. For a given vector space $V$, we denote by
$V^\ast$ the linear dual of $V$, i.e.,
$V^\ast=\mathrm{Hom}_\mathbb{K}(V,\mathbb{K})$. An endomorphism
$\varphi$ of $V$ induces an endomorphism $\varphi^\ast$ of
$V^\ast$ given by $\varphi^\ast(f)=f\varphi$ for any $f\in
V^\ast$. The canonical pairing between $V^\ast$ and $V$ is denoted by $\langle, \rangle$. It induces a pairing, still denoted by $\langle, \rangle$, between $(V^\ast)^{\otimes k}$and $V^{\otimes k}$ for each positive integer $k$ in a natural way. It is well-known that the linear dual of a coalgebra is
an algebra. So we have

\begin{proposition}The linear dual of a (noncounitary) Rota-Baxter coalgebra together with the linear dual of the Rota-Baxter operator is a (nonunitary)
Rota-Baxter algebra. \end{proposition}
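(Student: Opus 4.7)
The plan is to carry out a direct dualization of the Rota-Baxter coalgebra identity, pairing both sides with a simple tensor of functionals. First I would recall the standard coalgebra-to-algebra duality: $C^\ast$ becomes a (unital) associative algebra under the convolution product $(f\ast g)(c)=\sum f(c_{(1)})g(c_{(2)})$, which is precisely $\Delta^\ast$ restricted to $C^\ast\otimes C^\ast\subset (C\otimes C)^\ast$, with unit $\varepsilon$ (or no unit in the noncounitary case). The dual operator is $P^\ast(f)=f\circ P$, which is manifestly linear.

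Second, I would evaluate each of the four terms of the desired Rota-Baxter algebra identity at an arbitrary $c\in C$ and rewrite each as $(f\otimes g)$ applied to a tensor in $C\otimes C$ built from $\Delta$ and $P$. Explicitly, with the conventions above,
\begin{align*}
(P^\ast(f)\ast P^\ast(g))(c)&=(f\otimes g)\bigl((P\otimes P)\Delta(c)\bigr),\\
P^\ast(f\ast P^\ast(g))(c)&=(f\otimes g)\bigl((\mathrm{id}\otimes P)\Delta P(c)\bigr),\\
P^\ast(P^\ast(f)\ast g)(c)&=(f\otimes g)\bigl((P\otimes \mathrm{id})\Delta P(c)\bigr),\\
\lambda P^\ast(f\ast g)(c)&=(f\otimes g)\bigl(\lambda\Delta P(c)\bigr),
\end{align*}
each obtained by unwinding the definitions of $\ast$, $P^\ast$ and Sweedler's notation.

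Third, I would apply the Rota-Baxter coalgebra identity \eqref{RBO1} at $c$, which asserts exactly that $(P\otimes P)\Delta(c)$ equals the sum of the three tensors on the right-hand sides above. Pairing that equality with $f\otimes g$ yields
\[
P^\ast(f)\ast P^\ast(g)=P^\ast(f\ast P^\ast(g))+P^\ast(P^\ast(f)\ast g)+\lambda P^\ast(f\ast g)
\]
as functionals on $C$, since $c\in C$ and $f,g\in C^\ast$ were arbitrary. Thus $(C^\ast,\Delta^\ast,\varepsilon^\ast,P^\ast)$ is a Rota-Baxter algebra of the same weight $\lambda$; the noncounitary case is identical except that one works in the nonunital algebra $C^\ast$.

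I do not anticipate a genuine obstacle: the statement is a formal dualization and the only thing to be careful about is that the convolution product really coincides with $\Delta^\ast$ restricted to $C^\ast\otimes C^\ast$ (so that evaluating the identity term-by-term on $c$ is legitimate), and that the convention $P^\ast(f)=f\circ P$ is used consistently so the pairing calculations line up with the placements of $P$ on each tensor factor.
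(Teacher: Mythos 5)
Your proof is correct and is essentially the same argument as the paper's: both simply transpose the coalgebra identity \eqref{RBO1} under the contravariant duality functor, the paper doing so compactly at the operator level ($\Delta^\ast(P^\ast\otimes P^\ast)=((P\otimes P)\Delta)^\ast=\cdots$) while you unwind the same computation by pairing with $f\otimes g$ and evaluating at $c$. The elementwise pairings you list are all correct, so the two write-ups differ only in presentation.
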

\begin{proof}It follows immediately from that\begin{align*}
\Delta^\ast(P^\ast\otimes P^\ast) &=((P\otimes P) \Delta)^\ast\\[3pt]
&=((\mathrm{id}\otimes P)  \Delta   P+(P\otimes \mathrm{id})
\Delta
P+\lambda \Delta  P)^\ast\\[3pt]
&=P^\ast\Delta ^\ast(\mathrm{id}\otimes P^\ast)+P^\ast\Delta
^\ast(P^\ast\otimes \mathrm{id})+\lambda P^\ast\Delta ^\ast.
\end{align*}\end{proof}

But the converse is not true in general, unless the dimension is
finite. In order to overcome this difficulty we need some tools
from the theory of Rota-Baxter algebras and that of coalgebras.

Let $(R,P)$ be a Rota-Baxter algebra of weight $\lambda$. The
\emph{double product} is defined by $$x\star_P
y=xP(y)+P(x)y+\lambda xy,\quad\text{for}~x,y\in R.$$ By the
Theorem 1.1.17 in \cite{Guo}, $R$ equipped with the product
$\star_P$ is a nonunitary $\mathbb{K}$-algebra which will be
denoted by $R_P$, and $(R,\star_P,P)$ is a Rota-Baxter algebra of
weight $\lambda$. Moreover $P$ is a homomorphism of algebras from
$R_P$ to $R$.

Recall that, for any unitary algebra $(A,m,u)$, the finite dual of
 $A$ is $${A}^\circ=\{f\in {A}^*\mid \mathrm{ker}f~\text{contains a cofinite ideal of}~ A\}.$$
A cofinite ideal $I$ of $A$ is an ideal of $A$ subject to
$\dim(A/I)<\infty$. It is well-known that $A^\circ$ is a coalgebra
with comultiplication $\Delta=m^*$ and counit $\varepsilon=u^*$
(see \cite{Sw}).

Under the notation above, we have the following result:

\begin{theorem}Let $(R,P)$ be a unitary Rota-Baxter algebra of weight $\lambda$. If any ideal of $R_P$ is also an ideal of $R$,
then the finite dual $({R}^\circ,P^*)$ is a Rota-Baxter coalgebra
of weight $\lambda$.
\end{theorem}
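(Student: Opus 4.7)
The plan is to verify two things in order: first, that the linear endomorphism $P^*$ of $R^*$ restricts to an endomorphism of $R^\circ$; second, that once this closure is known, the Rota-Baxter coalgebra identity \eqref{RBO1} for $(R^\circ,P^*)$ is obtained by a direct dualisation of the Rota-Baxter algebra identity for $(R,P)$.

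For the first step, I would fix $f\in R^\circ$ together with a cofinite ideal $I$ of $R$ satisfying $I\subseteq\ker f$, and seek a cofinite ideal of $R$ sitting inside $\ker(P^*f)=\ker(fP)$, i.e., inside $P^{-1}(I)$. By Theorem 1.1.17 of \cite{Guo}, the map $P\colon R_P\to R$ is an algebra homomorphism, hence $J:=P^{-1}(I)$ is an ideal of $R_P$. The standing hypothesis of the theorem then delivers that $J$ is also an ideal of $R$. Cofiniteness of $J$ is automatic: $P$ induces an injection $R_P/J\hookrightarrow R/I$, so $\dim R/J=\dim R_P/J\leq\dim R/I<\infty$. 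Thus $J$ is a cofinite ideal of $R$ contained in $\ker(P^*f)$, and $P^*f\in R^\circ$.

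For the second step, I would test \eqref{RBO1} for $(R^\circ,P^*)$ by pairing both sides against an arbitrary tensor $x\otimes y\in R\otimes R$. Using $\Delta=m^*$, which translates to $\sum f_{(1)}(a)f_{(2)}(b)=f(ab)$ for all $a,b\in R$, the left-hand side pairs to $f\bigl(P(x)P(y)\bigr)$. Setting $g=P^*f$ and noting that $\Delta(g)(a\otimes b)=f(P(ab))$, the three summands on the right-hand side pair to $f(P(xP(y)))$, $f(P(P(x)y))$ and $\lambda f(P(xy))$ respectively. The Rota-Baxter identity for $(R,P)$ equates the two sides, which yields \eqref{RBO1} after observing that an element of $R^\circ\otimes R^\circ$ is determined by its values on $R\otimes R$.

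The decisive step is the first one: closure of $R^\circ$ under $P^*$ is precisely where the hypothesis that every ideal of $R_P$ is an ideal of $R$ is consumed. Without it, $P^{-1}(I)$ would only be guaranteed to be an ideal of $R_P$, which need not contain any cofinite $R$-ideal, and $P^*f$ could fail to lie in the finite dual. Once this closure is secured, the verification of \eqref{RBO1} is a transparent dualisation of the Rota-Baxter axiom and presents no further difficulty.
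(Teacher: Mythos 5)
Your proposal is correct and follows essentially the same route as the paper: the hypothesis is consumed exactly where you say, in showing $P^*(R^\circ)\subseteq R^\circ$ via the fact that $P\colon R_P\to R$ is an algebra map, after which the identity \eqref{RBO1} is a transparent dualisation of the Rota--Baxter axiom. The only cosmetic difference is that you unpack the closure step by explicitly exhibiting the cofinite ideal $P^{-1}(I)$, whereas the paper cites Sweedler's Lemma 6.0.1(a) to get $P^*(R^\circ)\subseteq R_P^\circ$ and then uses the hypothesis to conclude $R_P^\circ\subseteq R^\circ$.
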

\begin{proof}
For any cofinite ideal $I$ of $R_P$, $I$ is also a cofinite ideal
of $R$ by the assumption, and hence ${R_P}^\circ\subset R^\circ$.
Since $P$ is an algebra map from $R_P$ to $R$, we get
$P^*(R^\circ)\subset {R_P}^\circ$ by Lemma 6.0.1 (a) in \cite{Sw}.
Therefore $P^*(R^\circ)\subset {R}^\circ$. For any $f\in R^\circ$
and $a,b\in R$, we have
\begin{align*}
\langle(P^*\otimes P^*)\Delta(f),a\otimes b\rangle
&=\langle\Delta(f),P(a)\otimes P(b)\rangle\\[3pt]
& =\langle
f,P(a)P(b)\rangle\\[3pt]
&=\langle f,P(P(a)b+aP(b)+\lambda ab)\rangle\\[3pt]
&=\langle P^*(f),P(a)b+aP(b)+\lambda ab\rangle  \\[3pt]
&=\langle\Delta P^*(f), P(a)\otimes b+a\otimes P(b)+\lambda a\otimes b\rangle\\[3pt]
&=\langle ((P^*\otimes \mathrm{id})\Delta P^*+(\mathrm{id}\otimes
P^*)\Delta P^*+\lambda \Delta P^*)(f),a\otimes b\rangle,
\end{align*}
where the third equality holds since $P$ is a Rota-Baxter operator. So the linear operator $P^*:R^\circ\rightarrow R^\circ$
satisfies $$(P^*\otimes P^*)\Delta=(P^*\otimes \mathrm{id})\Delta
P^*+(\mathrm{id}\otimes P^*)\Delta P^*+\lambda \Delta P^*.$$ Hence
the coalgebra $(R^\circ,P^*)$ is a Rota-Baxter coalgebra of weight
$\lambda$.
\end{proof}

\begin{example}Let $R=t\mathbb{K}[t]$ be the algebra of polynomials without constant terms and $P$ the linear endomorphism of $R$ given by $P(t^n)=\frac{q^n}{1-q^n}t^n,$ where $q\in \mathbb{K}$ is not a root of unity. Then $(R,P)$ is a Rota-Baxter algebra of weight $1$ (see Example 1.1.8 in \cite{Guo}). From \begin{align*}
  t^n\star_P t^m&=P(t^n)t^m+t^nP(t^m)+t^{m+n}\\
&=\frac{q^n}{1-q^n}t^{m+n}+\frac{q^m}{1-q^m}t^{m+n}+t^{m+n}\\
&=\frac{1-q^{m+n}}{(1-q^n)(1-q^m)}t^{m+n},
\end{align*}
we get
\begin{align*}
  t^n t^m=t^n\star_P \frac{(1-q^n)(1-q^m)}{1-q^{m+n}}t^m.
\end{align*}
So for any $a\in R$, there exists $a'\in
R$ such that $t^n a=t^n\star_P a'$. Furthermore,
we can obtain that,  for any $x,a\in R$, there exists
$a'\in R$ such that $x a=x\star_P a'$.
Let $I$ be an ideal of $R_P$. For any $x\in I$ and $a\in R$, there
exists $a'\in R$ such that $xa=x\star_P a'$. Since $x\star_P a'\in
I$, then $xa\in I$. Hence $I$ is also an ideal of
$R$. So the finite dual $(R^\circ, P^*)$ is a Rota-Baxter coalgebra of weight $1$.\end{example}

Let $R$ be a locally finite graded algebra, i.e., $R$ has a direct
decomposition $R=\bigoplus_{i\geq 0}R_i$ of finite dimensional
subspace $R_i$ and $R_iR_j\subset R_{i+j}$. Then the graded dual
$R^g=\bigoplus_{i\geq 0}R_i^\ast$ is a coalgebra (see e.g.,
\cite{Sw}). If moreover $R$ is a Rota-Baxter algebra of weight
$\lambda$ with Rota-Baxter operator $P$ which preserves the grading
$P(R_i)\subset R_i$, then we have

\begin{proposition}The graded dual $R^g$ of $R$ is a Rota-Baxter coalgebra of weight $\lambda$ together with the operator $P^\ast$. \end{proposition}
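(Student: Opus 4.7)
The plan is to reduce the statement to the same pairing-based calculation used in the proof of Theorem 3.2, after first verifying that $P^{\ast}$ actually restricts to an endomorphism of the graded dual $R^{g}$. Since $P(R_i)\subset R_i$, for any $f\in R_i^{\ast}$ and any homogeneous element $x\in R_j$ with $j\neq i$ we have $P(x)\in R_j$, hence $\langle P^{\ast}(f),x\rangle=\langle f,P(x)\rangle=0$. This shows $P^{\ast}(R_i^{\ast})\subset R_i^{\ast}$, so $P^{\ast}$ preserves the grading and in particular sends $R^{g}=\bigoplus_{i\geq 0}R_i^{\ast}$ into itself.

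Next I would recall that the comultiplication on $R^{g}$ is the transpose of the multiplication of $R$: for $f\in R_n^{\ast}$, the element $\Delta(f)\in R^{g}\otimes R^{g}$ is characterized by $\langle\Delta(f),a\otimes b\rangle=\langle f,ab\rangle$ for all $a\in R_i$, $b\in R_{n-i}$, $0\leq i\leq n$ (local finiteness guarantees that the image of $m^{\ast}$ lands in the algebraic tensor product $R^{g}\otimes R^{g}$, not merely in $(R\otimes R)^{\ast}$). With this in hand, the Rota-Baxter coalgebra identity can be checked by testing against arbitrary $a\otimes b\in R\otimes R$ and using the defining identity of the Rota-Baxter algebra $(R,P)$, following the same chain of equalities as in Theorem 3.2: pass $P^{\ast}\otimes P^{\ast}$ through $\Delta$ to absorb it onto $a\otimes b$ as $P(a)\otimes P(b)$, apply the Rota-Baxter axiom $P(a)P(b)=P(aP(b))+P(P(a)b)+\lambda P(ab)$, and then push the resulting $P^{\ast}$ back through $\Delta$ to split the three terms on the right-hand side.

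The main obstacle, and the only place where the graded setting differs from the finite dual setting of Theorem 3.2, is ensuring that $P^{\ast}$ lands in $R^{g}$ in the first place; this is precisely what the grading-preservation assumption $P(R_i)\subset R_i$ is for. Once that is secured, the rest is a formal dualization that does not require any analogue of the ``ideals of $R_P$ are ideals of $R$'' hypothesis, since the graded dual is automatically a coalgebra without any additional condition on $P$. Hence $(R^{g},P^{\ast})$ is a Rota-Baxter coalgebra of weight $\lambda$.
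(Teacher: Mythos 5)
Your proof is correct: the paper states this proposition without proof, and your argument is exactly the intended adaptation of the pairing computation from Theorem 3.2, with the one genuinely new point --- that the grading-preservation hypothesis $P(R_i)\subset R_i$ is what guarantees $P^{\ast}(R^{g})\subset R^{g}$, replacing the ideal-theoretic hypothesis needed for the finite dual --- correctly identified and verified.
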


Now we turn to consider the relation between Rota-Baxter
coalgebras and noncounitary ones. Obviously every Rota-Baxter coalgebra is
noncounitary if we forget the counit. The reverse construction is not so
easy. We provide a solution for the idempotent case in the
following proposition.

\begin{proposition}Let $(C,\Delta, P)$ be an idempotent noncounitary Rota-Baxter coalgebra of
weight $-1$. Then there exists an idempotent Rota-Baxter coalgebra
$(\widetilde{C}, \widetilde{\Delta},\varepsilon,\widetilde{P})$
such that $C=\ker \varepsilon$ as a subspace and the restriction
of $\widetilde{P}$ on $C$ is just $P$.
\end{proposition}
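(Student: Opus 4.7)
The plan is to perform a counitization of $C$, which is the coalgebraic analogue of adjoining a unit to a non-unital algebra. I would set $\widetilde{C}=C\oplus\mathbb{K}$ as a vector space, writing $1$ for the basis element of the $\mathbb{K}$-summand. Define $\varepsilon$ to be the projection onto $\mathbb{K}$ along $C$ (so $\varepsilon(1)=1$ and $\varepsilon|_C=0$), and set
\[
\widetilde{\Delta}(1)=1\otimes 1,\qquad \widetilde{\Delta}(c)=\Delta(c)+c\otimes 1+1\otimes c\ \text{for}\ c\in C.
\]
A direct check (using coassociativity of $\Delta$ on $C$) shows that $\widetilde{\Delta}$ is coassociative and that $\varepsilon$ is a counit, so $(\widetilde{C},\widetilde{\Delta},\varepsilon)$ is a coalgebra with $C=\ker\varepsilon$. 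Then define $\widetilde{P}\colon \widetilde{C}\to\widetilde{C}$ by $\widetilde{P}|_C=P$ and $\widetilde{P}(1)=0$. Idempotency $\widetilde{P}^2=\widetilde{P}$ is immediate from $P^2=P$ and $\widetilde{P}(1)=0$.

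Next I would verify the Rota-Baxter identity \eqref{RBO1} of weight $-1$ for $\widetilde{P}$ by splitting into two cases. For $c=1$ all four terms in \eqref{RBO1} vanish because $\widetilde{P}(1)=0$ and $\widetilde{\Delta}(1)=1\otimes 1$. For $c\in C$, one computes
\[
(\widetilde{P}\otimes\widetilde{P})\widetilde{\Delta}(c)=(P\otimes P)\Delta(c),
\]
since the extra terms $c\otimes 1$ and $1\otimes c$ are killed by $\widetilde{P}$ in the tensor factor where $1$ appears. On the other hand, expanding $\widetilde{\Delta}\widetilde{P}(c)=\Delta P(c)+P(c)\otimes 1+1\otimes P(c)$ and applying $\mathrm{id}\otimes\widetilde{P}$, $\widetilde{P}\otimes\mathrm{id}$ and $-\mathrm{id}$, the terms involving $1$ cancel in pairs (using $\widetilde{P}(1)=0$ together with $P^2=P$), and one is left with
\[
(\mathrm{id}\otimes P)\Delta P(c)+(P\otimes\mathrm{id})\Delta P(c)-\Delta P(c),
\]
which equals $(P\otimes P)\Delta(c)$ by the hypothesis that $(C,\Delta,P)$ is a noncounitary Rota-Baxter coalgebra of weight $-1$. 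This completes the verification.

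There is no real obstacle here beyond the bookkeeping: the construction is forced by the requirements $C=\ker\varepsilon$, $\widetilde{P}|_C=P$, and idempotency (any choice $\widetilde{P}(1)=\alpha\cdot 1+v$ with $\alpha\in\{0,1\}$ and $v$ satisfying a compatible idempotent condition would work, but $\widetilde{P}(1)=0$ is the cleanest). The only delicate step is the cancellation of the $1$-containing terms in the Rota-Baxter identity for $c\in C$, which is precisely why the assignment $\widetilde{P}(1)=0$ (rather than $\widetilde{P}(1)=1$) is natural in the weight $-1$ setting.
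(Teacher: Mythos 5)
Your proposal is correct and follows essentially the same route as the paper: counitize $C$ by adjoining a group-like element $\mathbf{1}$ with $\widetilde{\Delta}(x)=\Delta(x)+\mathbf{1}\otimes x+x\otimes\mathbf{1}$, extend $P$, and check the weight $-1$ identity separately on $\mathbf{1}$ and on $C$. The only difference is that you set $\widetilde{P}(\mathbf{1})=0$ while the paper sets $\widetilde{P}(\mathbf{1})=\mathbf{1}$; both choices make the $\mathbf{1}$-containing terms cancel (each using $P^2=P$), so either extension yields a valid idempotent Rota--Baxter coalgebra with $C=\ker\varepsilon$ and $\widetilde{P}|_C=P$.
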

\begin{proof} We extend $(C,\Delta)$ to a coalgebra
$(\widetilde{C}, \widetilde{\Delta},\varepsilon)$ as follows. Fix
a symbol $\mathbf{1}$ and set
$\widetilde{C}=\mathbb{K}\mathbf{1}\oplus C$ as a vector space. We
define $\widetilde{\Delta}(\mathbf{1})=\mathbf{1}\otimes
\mathbf{1}$ and $\widetilde{\Delta}(x)=\Delta(x)+\textbf{1}\otimes
x+x\otimes \mathbf{1}$ for any $x\in C$. The counit $\varepsilon$
is defined to be the projection from $\widetilde{C}$ to
$\mathbb{K}\mathbf{1}$. It is not hard to verify that
$(\widetilde{C}, \widetilde{\Delta},\varepsilon)$ is a coalgebra.
We extend $P$ to $\widetilde{P}$ by setting that
$\widetilde{P}(\mathbf{1})=\mathbf{1}$ and $\widetilde{P}(x)=P(x)$
for $x\in C$. Then for any $x\in C$, we have\begin{align*}
\lefteqn{\big((\mathrm{id}\otimes \widetilde{P})
\widetilde{\Delta} \widetilde{P}+(\widetilde{P}\otimes
\mathrm{id}) \widetilde{\Delta} \widetilde{P}- \widetilde{\Delta}
\widetilde{P}
\big)(\mathbf{1}+x)}\\[3pt]
&=\big((\mathrm{id}\otimes \widetilde{P})  \widetilde{\Delta}
\widetilde{P}+(\widetilde{P}\otimes \mathrm{id})
\widetilde{\Delta}   \widetilde{P}- \widetilde{\Delta}
\widetilde{P} \big)(\mathbf{1})
\\[3pt]
&\ \ \ +\big((\mathrm{id}\otimes \widetilde{P})  \widetilde{\Delta}
\widetilde{P}+(\widetilde{P}\otimes \mathrm{id})
\widetilde{\Delta}   \widetilde{P}- \widetilde{\Delta}
\widetilde{P}
\big)(x)\\[3pt]
&=\mathbf{1}\otimes \mathbf{1}+(\mathrm{id}\otimes \widetilde{P})  \Delta   \widetilde{P}(x)+(\mathrm{id}\otimes \widetilde{P})(\mathbf{1}\otimes \widetilde{P}(x)+\widetilde{P}(x)\otimes \mathbf{1})\\[3pt]
&\ \ \ +(\widetilde{P}\otimes \mathrm{id})  \Delta   \widetilde{P}(x)+(\widetilde{P}\otimes \mathrm{id})(\mathbf{1}\otimes \widetilde{P}(x)+\widetilde{P}(x)\otimes \mathbf{1})\\[3pt]
&\ \ \ -\Delta \widetilde{ P}(x)-\mathbf{1}\otimes \widetilde{P}(x)-\widetilde{P}(x)\otimes \mathbf{1}\\[3pt]
&=\mathbf{1}\otimes \mathbf{1}+(\mathrm{id}\otimes P)  \Delta   P(x)+\mathbf{1}\otimes P^2(x)+P(x)\otimes \widetilde{P}(\mathbf{1})\\[3pt]
&\ \ \ +(P\otimes \mathrm{id})  \Delta   P(x)+\widetilde{P}(\mathbf{1})\otimes P(x)+P^2(x)\otimes \mathbf{1}\\[3pt]
&\ \ \ -\Delta  P(x)-\mathbf{1}\otimes P(x)-P(x)\otimes \mathbf{1}\\[3pt]
&=\mathbf{1}\otimes \mathbf{1}+(\mathrm{id}\otimes P)  \Delta   P(x)+\mathbf{1}\otimes P(x)+P(x)\otimes \mathbf{1}\\[3pt]
&\ \ \ +(P\otimes \mathrm{id})  \Delta   P(x)+\mathbf{1}\otimes P(x)+P(x)\otimes \mathbf{1}\\[3pt]
&\ \ \ -\Delta  P(x)-\mathbf{1}\otimes P(x)-P(x)\otimes \mathbf{1}\\[3pt]
&=\mathbf{1}\otimes \mathbf{1}+\big((\mathrm{id}\otimes P)  \Delta   P+(P\otimes \mathrm{id})  \Delta   P-\Delta  P\big)(x)\\[3pt]
&\ \ \ +\mathbf{1}\otimes P(x)+P(x)\otimes \mathbf{1}\\[3pt]
&=\mathbf{1}\otimes \mathbf{1}+\big(({P}\otimes {P}) {\Delta}\big)(x)+\mathbf{1}\otimes P(x)+P(x)\otimes \mathbf{1}\\[3pt]
&=\big((\widetilde{P}\otimes \widetilde{P}) \widetilde{\Delta}\big)(\mathbf{1})+\big((\widetilde{P}\otimes \widetilde{P}) \widetilde{\Delta}\big)(x)\\[3pt]
&=\big((\widetilde{P}\otimes \widetilde{P})
\widetilde{\Delta}\big)(\mathbf{1}+x).
\end{align*}So $(\widetilde{C}, \widetilde{\Delta},\varepsilon,
\widetilde{P})$ is an idempotent Rota-Baxter coalgebra of weight
$-1$.\end{proof}

Suppose $(C,\Delta,\varepsilon)$ is a coalgebra. A subspace $J$ of
$C$ is called a \emph{noncounitary coideal} if $\Delta(J)\subset C\otimes
J+J\otimes C$. If moreover $\varepsilon(J)=0$, then $J$ is a coideal of
$C$.

\begin{proposition}Let $(C,\Delta,P)$ be a Rota-Baxter coalgebra of nonzero weight $\lambda$. Then $P(C)$ is a noncounitary coideal of $C$. If $P(C)\subset\ker\varepsilon$ then $P(C)$ is a coideal of $C$. Furthermore, the quotient (noncounitary) coalgebra $C/P(C)$ inherits a (noncounitary) Rota-Baxter coalgebra structure. \end{proposition}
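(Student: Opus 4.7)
The plan is to exploit the hypothesis $\lambda \neq 0$ to solve for $\Delta P$ in the defining identity. Rearranging equation \eqref{RBO1} gives
$$\lambda\, \Delta P = (P\otimes P)\Delta - (\mathrm{id}\otimes P)\Delta P - (P\otimes \mathrm{id})\Delta P,$$
and the key observation is that each term on the right belongs to $P(C)\otimes C + C\otimes P(C)$: the first factors through $P(C)\otimes P(C)$, the second through $C\otimes P(C)$, and the third through $P(C)\otimes C$. Dividing by $\lambda$ yields $\Delta(P(c)) \in P(C)\otimes C + C\otimes P(C)$ for every $c\in C$, which is exactly the definition of $P(C)$ being a noncounitary coideal. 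The additional requirement $\varepsilon(P(C))=0$ is precisely the hypothesis $P(C)\subset\ker\varepsilon$, so under that assumption $P(C)$ becomes a coideal.

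For the final assertion I would invoke the standard quotient construction: since $P(C)$ is a (noncounitary) coideal, the quotient $C/P(C)$ inherits a (noncounitary) coalgebra structure $\overline{\Delta}$ for which the projection $\pi:C\to C/P(C)$ is a coalgebra map. The induced operator $\overline{P}$ on $C/P(C)$ would be defined by $\overline{P}\circ\pi=\pi\circ P$. This is automatically well-defined, but more strikingly $\pi\circ P=0$ since $P(c)\in P(C)$ for every $c$, so $\overline{P}$ is the zero map. The zero operator trivially satisfies \eqref{RBO1}, so $(C/P(C),\overline{\Delta},\overline{P})$, together with the induced counit when $C$ has one, is a (noncounitary) Rota-Baxter coalgebra.

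There is no real obstacle here: the entire argument reduces to a one-line rearrangement of \eqref{RBO1} followed by the standard coalgebra quotient. The only subtlety worth highlighting is that the nonzero weight hypothesis enters solely through the division by $\lambda$; at weight zero the defining identity produces no constraint of the form $\Delta P(c)\in P(C)\otimes C+C\otimes P(C)$, so the conclusion has no direct analogue in that case.
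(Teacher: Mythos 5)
Your argument is correct and is essentially the paper's own proof: the authors likewise divide \eqref{RBO1} by $\lambda$ to conclude $\Delta(P(C))\subset C\otimes P(C)+P(C)\otimes C$ and dismiss the quotient construction as straightforward. Your explicit remark that the induced operator $\overline{P}$ on $C/P(C)$ satisfies $\overline{P}\circ\pi=\pi\circ P=0$ and is therefore the zero operator is a correct and slightly more detailed account of what the paper leaves implicit.
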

\begin{proof}By the equation \eqref{RBO1}, we have $$\Delta  P=\frac{1}{\lambda}\big((P\otimes P) \Delta-(\mathrm{id}\otimes P)  \Delta   P-(P\otimes \mathrm{id})  \Delta   P\big), $$ which implies that  $\Delta(P(C))\subset C\otimes
P(C)+P(C)\otimes C$. The rest of the statements is
straightforward.\end{proof}

\begin{proposition}Let $(C,\Delta,\varepsilon,P)$ be a Rota-Baxter coalgebra of weight $\lambda$. Define $\overline{P}=-\lambda\mathrm{id}-P$. Then $(C,\Delta,\varepsilon,\overline{P})$ is again a Rota-Baxter coalgebra of weight $\lambda$. \end{proposition}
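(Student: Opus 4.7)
The plan is a direct substitution and expansion: plug $\overline{P}=-\lambda\,\mathrm{id}-P$ into the Rota-Baxter coalgebra axiom \eqref{RBO1} and verify that both sides agree, using the axiom for the original operator $P$ as the only nontrivial input.

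First I would expand the left-hand side. Using bilinearity of the tensor product,
\begin{align*}
(\overline{P}\otimes\overline{P})\Delta
&=\bigl((-\lambda\,\mathrm{id}-P)\otimes(-\lambda\,\mathrm{id}-P)\bigr)\Delta\\
&=\lambda^{2}\Delta+\lambda(\mathrm{id}\otimes P)\Delta+\lambda(P\otimes\mathrm{id})\Delta+(P\otimes P)\Delta.
\end{align*}
Next I would compute $\Delta\overline{P}=-\lambda\Delta-\Delta P$ and use it to expand each of the three terms on the right-hand side of the axiom for $\overline{P}$. A short calculation gives
\begin{align*}
(\mathrm{id}\otimes\overline{P})\Delta\overline{P}
&=\lambda^{2}\Delta+\lambda\Delta P+\lambda(\mathrm{id}\otimes P)\Delta+(\mathrm{id}\otimes P)\Delta P,\\
(\overline{P}\otimes\mathrm{id})\Delta\overline{P}
&=\lambda^{2}\Delta+\lambda\Delta P+\lambda(P\otimes\mathrm{id})\Delta+(P\otimes\mathrm{id})\Delta P,\\
\lambda\Delta\overline{P}
&=-\lambda^{2}\Delta-\lambda\Delta P.
\end{align*}

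Summing these three expressions, the $\lambda^{2}\Delta$ and $\lambda\Delta P$ contributions collapse to $\lambda^{2}\Delta+\lambda\Delta P$, and what remains is
\begin{equation*}
\lambda^{2}\Delta+\lambda(\mathrm{id}\otimes P)\Delta+\lambda(P\otimes\mathrm{id})\Delta+\lambda\Delta P+(\mathrm{id}\otimes P)\Delta P+(P\otimes\mathrm{id})\Delta P.
\end{equation*}
Now I invoke the Rota-Baxter coalgebra identity \eqref{RBO1} for $P$ to replace the last three terms: $(\mathrm{id}\otimes P)\Delta P+(P\otimes\mathrm{id})\Delta P+\lambda\Delta P=(P\otimes P)\Delta$. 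This produces exactly the left-hand side computed above, so the two sides coincide.

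The argument is entirely formal and the only step that is not mechanical is recognizing the $\lambda\Delta P$ term that the $\lambda\Delta\overline{P}$ piece leaves behind, which is precisely what is needed to assemble the Rota-Baxter relation for $P$. The main obstacle is therefore purely bookkeeping: keeping track of the signs produced by $\overline{P}=-\lambda\,\mathrm{id}-P$ and ensuring the $\lambda^{2}\Delta$ and $\lambda\Delta P$ contributions cancel correctly before applying \eqref{RBO1}.
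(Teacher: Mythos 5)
Your proposal is correct and follows essentially the same route as the paper: expand each of the three terms on the right-hand side for $\overline{P}$, observe that the $\lambda^{2}\Delta$ and $\lambda\Delta P$ contributions reduce to a single copy of each, and then apply the Rota--Baxter identity for $P$ to the surviving block $(\mathrm{id}\otimes P)\Delta P+(P\otimes\mathrm{id})\Delta P+\lambda\Delta P$ to recover $(\overline{P}\otimes\overline{P})\Delta$. All intermediate expansions and signs check out against the paper's computation.
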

\begin{proof}By the definition, we have\begin{align*}
\lefteqn{(\mathrm{id}\otimes \overline{P})  \Delta
\overline{P}+(\overline{P}\otimes
\mathrm{id})  \Delta   \overline{P}+\lambda \Delta  \overline{P}}\\[3pt]
&=(\mathrm{id}\otimes (-\lambda\mathrm{id}-P))  \Delta
(-\lambda\mathrm{id}-P)+((-\lambda\mathrm{id}-P)\otimes
\mathrm{id})  \Delta   (-\lambda\mathrm{id}-P)\\[3pt]
&\ \ \ +\lambda \Delta  (-\lambda\mathrm{id}-P)\\[3pt]
&=\lambda^2\Delta+\lambda\Delta P+\lambda(\mathrm{id}\otimes P)
\Delta +(\mathrm{id}\otimes P) \Delta
P\\[3pt]
&\ \ \ +\lambda^2\Delta+\lambda\Delta P+\lambda(P\otimes \mathrm{id})
\Delta +(P\otimes \mathrm{id}) \Delta
P-\lambda^2\Delta-\lambda\Delta P\\[3pt]
&=\lambda^2\Delta+\lambda(\mathrm{id}\otimes P)
\Delta+\lambda(P\otimes \mathrm{id}) \Delta +(\mathrm{id}\otimes
P) \Delta P +(P\otimes \mathrm{id}) \Delta
P+\lambda\Delta P\\[3pt]
&=\lambda^2\Delta+\lambda(\mathrm{id}\otimes P)
\Delta+\lambda(P\otimes \mathrm{id}) \Delta +(P\otimes P)\Delta\\[3pt]
&=\big( (-\lambda\mathrm{id}-P)\otimes  (-\lambda\mathrm{id}-P)\big)\Delta\\[3pt]
&=(\overline{P}\otimes \overline{P}) \Delta.
\end{align*}\end{proof}

Corresponding to the double product of Rota-Baxter algebra, we
have the double coproduct construction.

\begin{proposition}\label{prop delta_P}Let $(C,\Delta, P)$ be a noncounitary Rota-Baxter coalgebra of weight $\lambda$. We define $\Delta_P=(P\otimes \mathrm{id})  \Delta+(\mathrm{id}\otimes P)  \Delta+\lambda \Delta$. Then $\Delta_PP=(P\otimes P)\Delta$ and $(C,\Delta_P, P)$ is again a noncounitary Rota-Baxter coalgebra of weight $\lambda$. \end{proposition}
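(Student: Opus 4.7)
The plan is to verify three things in turn: (i) the stated identity $\Delta_P P=(P\otimes P)\Delta$; (ii) the Rota-Baxter identity \eqref{RBO1} for $P$ with respect to $\Delta_P$; and (iii) coassociativity of $\Delta_P$, which is needed so that $(C,\Delta_P)$ is a noncounitary coalgebra in the first place. I expect (iii) to involve the most bookkeeping; (i) and (ii) are essentially rearrangements of \eqref{RBO1}.

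For (i), I will simply substitute the definition of $\Delta_P$ to obtain
\[
\Delta_P P=(P\otimes\mathrm{id})\Delta P+(\mathrm{id}\otimes P)\Delta P+\lambda\Delta P,
\]
and observe that \eqref{RBO1} rewrites the right hand side as $(P\otimes P)\Delta$. Then (ii) follows from (i) by direct computation: expanding $(P\otimes P)\Delta_P$ via the definition of $\Delta_P$ gives $(P^{2}\otimes P)\Delta+(P\otimes P^{2})\Delta+\lambda(P\otimes P)\Delta$, while applying (i) to the other side yields
\[
(\mathrm{id}\otimes P)\Delta_P P+(P\otimes\mathrm{id})\Delta_P P+\lambda\,\Delta_P P = \bigl((\mathrm{id}\otimes P)+(P\otimes\mathrm{id})+\lambda\,\mathrm{id}\otimes\mathrm{id}\bigr)(P\otimes P)\Delta,
\]
which expands to the same expression, giving \eqref{RBO1} for $\Delta_P$.

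The main work is (iii). I will expand $(\Delta_P\otimes\mathrm{id})\Delta_P$ by linearity using the definition of $\Delta_P$ and, whenever the outer $\Delta_P$ acts on a tensor slot already of the form $P(-)$, the identity (i). The result is a linear combination of operators of the form $(Q_{1}\otimes Q_{2}\otimes Q_{3})(\Delta\otimes\mathrm{id})\Delta$ with $Q_{i}\in\{\mathrm{id},P\}$ and explicit $\lambda$-weights. The analogous expansion of $(\mathrm{id}\otimes\Delta_P)\Delta_P$ produces the same linear combination of $Q_{1}\otimes Q_{2}\otimes Q_{3}$'s, composed now with $(\mathrm{id}\otimes\Delta)\Delta$; the matching of coefficients is forced by the left--right symmetry between the summands $(P\otimes\mathrm{id})\Delta$ and $(\mathrm{id}\otimes P)\Delta$ of $\Delta_P$. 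Coassociativity of $\Delta$ then finishes the argument. The principal obstacle is purely the combinatorial matching of the (up to) nine terms on each side; beyond identity (i), no further Rota-Baxter input is needed.
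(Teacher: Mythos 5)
Your proposal is correct and follows essentially the same route as the paper: derive $\Delta_P P=(P\otimes P)\Delta$ directly from \eqref{RBO1}, use it both to reduce the Rota--Baxter identity for $\Delta_P$ to a rearrangement of $(P\otimes P)\Delta_P$ and to expand $(\Delta_P\otimes\mathrm{id})\Delta_P$ and $(\mathrm{id}\otimes\Delta_P)\Delta_P$ into matching linear combinations of operators $Q_1\otimes Q_2\otimes Q_3$ (seven terms each) composed with $(\Delta\otimes\mathrm{id})\Delta$ and $(\mathrm{id}\otimes\Delta)\Delta$ respectively, so that coassociativity of $\Delta$ finishes the argument. The only difference is the order in which the three verifications are carried out, which is immaterial.
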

\begin{proof}
The equality
\begin{equation*}
 \Delta_PP=(P\otimes P)\Delta
\end{equation*} follows from \eqref{RBO1} immediately. We use it to show the coassociativity of $\Delta_P$:
\begin{align*}
&(\Delta_P\otimes \mathrm{id}) \Delta_P -(\mathrm{id}\otimes \Delta_P) \Delta_P\\[3pt]
&=(\Delta_P\otimes \mathrm{id})(P\otimes \mathrm{id}+\mathrm{id}\otimes P+\lambda\mathrm{id}\otimes \mathrm{id})\Delta \\[3pt]
&\quad -(\mathrm{id}\otimes \Delta_P)(P\otimes \mathrm{id}+\mathrm{id}\otimes P+\lambda\mathrm{id}\otimes \mathrm{id})\Delta\\[3pt]
&=(\Delta_P P\otimes \mathrm{id}+\Delta_P\otimes P+\lambda \Delta_P\otimes \mathrm{id})\Delta\\[3pt]
&\quad-(P\otimes \Delta_P+\mathrm{id}\otimes \Delta_P P +\lambda \mathrm{id}\otimes \Delta_P )\Delta\\[3pt]
&=\big(((P\otimes P)\Delta)\otimes \mathrm{id} +((P\otimes \mathrm{id}+\mathrm{id}\otimes P+\lambda\mathrm{id}\otimes \mathrm{id})\Delta)\otimes P \\[3pt]
&\quad+((\lambda P\otimes \mathrm{id}+\lambda\mathrm{id}\otimes P+\lambda^2\mathrm{id}\otimes \mathrm{id})\Delta)\otimes \mathrm{id} \big)\Delta\\[3pt]
&\quad-\big(P\otimes ((P\otimes \mathrm{id}+\mathrm{id}\otimes P+\lambda\mathrm{id}\otimes \mathrm{id})\Delta)+\mathrm{id}\otimes((P\otimes P)\Delta)\\[3pt]
&\quad+\lambda\mathrm{id}\otimes ((P\otimes \mathrm{id}+\mathrm{id}\otimes P+\lambda\mathrm{id}\otimes \mathrm{id})\Delta)\big)\Delta\\[3pt]
&=(P\otimes P\otimes\mathrm{id}+P\otimes \mathrm{id}\otimes P+\mathrm{id}\otimes P\otimes P+\lambda \mathrm{id}\otimes\mathrm{id}\otimes P\\[3pt]
&\quad +\lambda P\otimes \mathrm{id}\otimes \mathrm{id}+\lambda \mathrm{id}\otimes P\otimes \mathrm{id}+\lambda^2 \mathrm{id}\otimes \mathrm{id}\otimes \mathrm{id})(\Delta\otimes \mathrm{id})\Delta\\[3pt]
&\quad-(P\otimes P\otimes \mathrm{id}+P\otimes \mathrm{id}\otimes P+\lambda P\otimes\mathrm{id}\otimes \mathrm{id}+\mathrm{id}\otimes P\otimes P\\[3pt]
&\quad+\lambda \mathrm{id}\otimes P\otimes \mathrm{id}+\lambda \mathrm{id}\otimes \mathrm{id}\otimes P+\lambda^2\mathrm{id}\otimes \mathrm{id}\otimes\mathrm{id})(\mathrm{id}\otimes\Delta)\Delta\\[3pt]
&=0.
\end{align*}

Finally, we have
\begin{align*}
\lefteqn{(\mathrm{id}\otimes P)  \Delta_P   P+(P\otimes
\mathrm{id})  \Delta_P   P+\lambda \Delta_P  P}\\[3pt]
&=(\mathrm{id}\otimes P) (P\otimes P)\Delta +(P\otimes
\mathrm{id})  (P\otimes P)\Delta +\lambda (P\otimes P)\Delta \\[3pt]
&=(P\otimes P)((P\otimes \mathrm{id})  \Delta+(\mathrm{id}\otimes P)  \Delta+\lambda \Delta)\\[3pt]
&=(P\otimes P) \Delta_P.
\end{align*}\end{proof}

\begin{proposition}Let $(C,\Delta)$ be a (noncounitary) coalgebra. A linear operator $P$ on $C$ is an idempotent Rota-Baxter operator of weight $-1$ if and only if there is a $\mathbb{K}$-vector space direct sum decomposition $C=C_1\oplus C_2$ of $C$ into noncounitary coideals $C_1$ and $C_2$ such that $$P:C\rightarrow C_1$$ is the projection of $C$ onto $C_1$: $P(c_1+c_2)=c_1$ for $c_1\in C_1$ and $c_2\in C_2$.
\end{proposition}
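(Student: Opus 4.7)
The plan is to prove both directions by examining how $\Delta$ decomposes with respect to the direct sum $C=C_1\oplus C_2$, exploiting the fact that $P\otimes P$, $P\otimes \mathrm{id}$, $\mathrm{id}\otimes P$, and $\mathrm{id}$ act as the four block-projections onto $C_1\otimes C_1$, $C_1\otimes C$, $C\otimes C_1$, and $C\otimes C$ respectively.

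For the ``if'' direction, assume $C=C_1\oplus C_2$ with each $C_i$ a noncounitary coideal, and let $P$ be the projection onto $C_1$. Idempotence $P^2=P$ is immediate. For any $c_1\in C_1$, $c_2\in C_2$, decompose $\Delta(c_i)=\sum_{j,k}\Delta(c_i)_{jk}$ with $\Delta(c_i)_{jk}\in C_j\otimes C_k$. The coideal condition $\Delta(C_1)\subseteq C\otimes C_1+C_1\otimes C$ kills the $C_2\otimes C_2$ block of $\Delta(c_1)$, and $\Delta(C_2)\subseteq C\otimes C_2+C_2\otimes C$ kills the $C_1\otimes C_1$ block of $\Delta(c_2)$. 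A short block-by-block calculation on $c=c_1+c_2$ then shows that $(P\otimes P)\Delta(c)=\Delta(c_1)_{11}$, while
\[
\bigl((\mathrm{id}\otimes P)+(P\otimes\mathrm{id})-\mathrm{id}\otimes\mathrm{id}\bigr)\Delta P(c)=\bigl(\Delta(c_1)_{11}+\Delta(c_1)_{21}\bigr)+\bigl(\Delta(c_1)_{11}+\Delta(c_1)_{12}\bigr)-\Delta(c_1)=\Delta(c_1)_{11},
\]
which is exactly the Rota-Baxter identity \eqref{RBO1} of weight $-1$.

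For the ``only if'' direction, assume $P$ is an idempotent Rota-Baxter operator of weight $-1$. Set $C_1=P(C)$ and $C_2=\ker P$. Idempotence yields the decomposition $c=P(c)+(c-P(c))$ with $C_1\cap C_2=0$, so $C=C_1\oplus C_2$ and $P$ is the projection onto $C_1$. The proposition already established in this section (that $P(C)$ is a noncounitary coideal for any Rota-Baxter coalgebra of nonzero weight) applies since $\lambda=-1\neq 0$, giving that $C_1$ is a noncounitary coideal. For $C_2=\ker P$, take $c\in\ker P$; then $\Delta P(c)=0$, so \eqref{RBO1} collapses to $(P\otimes P)\Delta(c)=0$. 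This means the $C_1\otimes C_1$ block of $\Delta(c)$ vanishes, so
\[
\Delta(c)\in C_1\otimes C_2+C_2\otimes C_1+C_2\otimes C_2\subseteq C\otimes C_2+C_2\otimes C,
\]
proving $C_2$ is a noncounitary coideal.

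There is no conceptual obstacle here; the main thing to handle carefully is the four-block bookkeeping in the ``if'' direction, and the cleanest presentation is to record the four coideal/range conditions as statements about which blocks of $\Delta(c_i)$ vanish and then read off both sides of \eqref{RBO1} block by block.
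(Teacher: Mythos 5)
Your proof is correct and follows essentially the same route as the paper's: both directions amount to reading off the action of $P\otimes P$, $P\otimes\mathrm{id}$ and $\mathrm{id}\otimes P$ on the block decomposition of $\Delta(c)$ induced by $C=C_1\oplus C_2$, with $C_1=P(C)$ and $C_2=\ker P$ in the converse. The only differences are cosmetic — the paper writes the ``if'' computation with explicit Sweedler-style representatives instead of your four-block notation, and re-derives that $P(C)$ is a noncounitary coideal rather than citing the earlier proposition as you do — so there is nothing to fix.
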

\begin{proof}
Suppose $C$ has a $\mathbb{K}$-vector space direct sum
decomposition $C=C_1\oplus C_2$, where $C_1$ and $C_2$ are
noncounitary coideals of $C$. Then the projection $P$ of $C$ onto
$C_1$ is idempotent since, for $c=c_1+c_2$ in $C$ with $c_1\in
C_1$  and $c_2\in C_2$, we have
$$P^2(c)=P^2(c_1+c_2)=P(c_1)=c_1=P(c).$$
Further, for $c=c_1+c_2$ in $C$ with $c_1\in C_1$  and $c_2\in C_2$, we have
$\Delta(c_1)=\sum_i a_i'\otimes x_i+\sum_j y_j\otimes a_j''$ and $\Delta(c_2)=\sum_i b_i'\otimes u_i+\sum_j v_j\otimes b_j''$, where $a_i',a_j''\in C_1$, $b_i',b_j''\in C_2$ and $x_i,y_j,u_i,v_j\in C$. Then
\begin{align*}
&(\mathrm{id}\otimes P+P\otimes \mathrm{id}-\mathrm{id}\otimes\mathrm{id})\Delta(P(c))\\[3pt]
&=(\mathrm{id}\otimes P+P\otimes \mathrm{id}-\mathrm{id}\otimes\mathrm{id})\Delta(c_1)\\[3pt]
&=\sum_i a_i'\otimes P(x_i)+\sum_j y_j\otimes a_j''+\sum_i a_i'\otimes x_i+\sum_j P(y_j)\otimes a_j''\\[3pt]
&~~-\sum_i a_i'\otimes x_i-\sum_j y_j\otimes a_j''\\[3pt]
&=\sum_i a_i'\otimes P(x_i)+\sum_j P(y_j)\otimes a_j''\\[3pt]
&=(P\otimes P)\Delta(c).\end{align*} Hence $P$ is an idempotent
Rota-Baxter operator of weight $-1$.

Conversely, suppose $P:C\rightarrow C $ is an idempotent Rota-Baxter operator of weight $-1$. Let $C_1=P(C)$ and $C_2=(\mathrm{id}-P)(C)$. It is easy to see that $C=C_1\oplus C_2$ as vector spaces. Since $c=P(c)+(\mathrm{id}-P)(c)$ is the decomposition of $c\in C$, we see that $P$ is the projection of $C$ onto $C_1$.
For any $c_1\in C_1$, from
$$(P\otimes P)\Delta(c_1)=(\mathrm{id}\otimes P+P\otimes \mathrm{id}-\mathrm{id}\otimes\mathrm{id})\Delta(P(c_1)),$$
we have
\begin{align*}
  \Delta(c_1)&=(\mathrm{id}\otimes P+P\otimes \mathrm{id}-P\otimes P)\Delta(c_1)\in C\otimes C_1+C_1\otimes C.
\end{align*}
So $\Delta(C_1)\subset  C\otimes C_1+C_1\otimes C$.

 For any $c_2\in C_2$, we have $$(P\otimes P)\Delta(c_2)=(\mathrm{id}\otimes P+P\otimes \mathrm{id}-\mathrm{id}\otimes\mathrm{id})\Delta(P(c_2))=0,$$
so $\Delta(C_2)\subset \mathrm{ker} P\otimes C+C\otimes
\mathrm{ker} P=C_2\otimes C+C\otimes C_2$. Hence $C_1$ and $C_2$
are noncounitary coideals of $C$.\end{proof}

We now give a dual version of the additive decomposition of Atkinson \cite{At}.

\begin{proposition}
  Let $(C,\Delta, \varepsilon)$ be a coalgebra. If a linear operator $P:C\rightarrow C$ is a Rota-Baxter operator of weight $\lambda$, then there is a linear map $\Phi : C\rightarrow C\otimes C$ such that $$(P\otimes P)\Delta=\Phi  P\qquad \text{and} \qquad (\overline{P}\otimes \overline{P})\Delta=-\Phi \overline{P},$$
  where $\overline{P}=-\lambda\mathrm{id}-P$. Suppose $\lambda\in \mathbb{K}$ is not zero. Then the converse is also true.
\end{proposition}
\begin{proof}

 If $P:C\rightarrow C$ is a Rota-Baxter operator of weight $\lambda$, then setting
 $$\Phi=(\mathrm{id}\otimes P+P\otimes\mathrm{id}+\lambda\mathrm{id}\otimes \mathrm{id})\Delta,$$ we get $(P\otimes P)\Delta=\Phi  P$ and
 \begin{align*}
(\overline{P}\otimes \overline{P})\Delta&=\big( (-\lambda\mathrm{id}-P)\otimes  (-\lambda\mathrm{id}-P)\big)\Delta\\[3pt]
  &= \lambda^2\Delta+\lambda(\mathrm{id}\otimes P)
\Delta+\lambda(P\otimes \mathrm{id}) \Delta +(P\otimes P)\Delta\\[3pt]
&=\lambda\Phi+ (P\otimes P)\Delta=\lambda\Phi+ \Phi P\\&=\Phi(\lambda\mathrm{id}+P)=-\Phi \overline{P}.
 \end{align*}

 Conversely, we have
 \begin{align*}
   \lambda \Phi&=-\Phi\overline{P}-\Phi P\\[3pt]
   &=(\overline{P}\otimes \overline{P})\Delta- (P\otimes P)\Delta\\[3pt]
   &=\big( (-\lambda\mathrm{id}-P)\otimes  (-\lambda\mathrm{id}-P)\big)\Delta- (P\otimes P)\Delta\\[3pt]
   &=\lambda\big(\lambda\Delta+(\mathrm{id}\otimes P)
\Delta+(P\otimes \mathrm{id}) \Delta \big).
 \end{align*}
 Since $\lambda\neq0$, we get $$\Phi=\lambda\Delta+(\mathrm{id}\otimes P)
\Delta+(P\otimes \mathrm{id}) \Delta.$$ So
 $(P\otimes P)\Delta=\Phi P$ implies that $P$ is a Rota-Baxter operator of weight $\lambda$.
 \end{proof}

\section{Rota-Baxter comodules}

\begin{definition}
  Let $(C,\Delta,P)$ be a Rota-Baxter coalgebra of weight $\lambda$.
\begin{enumerate}
  \item
  A \emph{left Rota-Baxter comodule} over $(C,\Delta,P)$ or simply a \emph{left $(C,\Delta,P)$-comodule} is a triple $(M,p,\delta)$ where $(M,\delta)$ is a $C$-comodule and a linear map $p:M\rightarrow M$ satisfying
  $$(P\otimes p)\delta=(\mathrm{id}\otimes p)\delta p+ (P\otimes \mathrm{id})\delta p+\lambda \delta p.$$
  In Sweedler's notation, this means
  \begin{align*}
   \sum P(m_{(-1)})&\otimes p(m_{(0)})=
    \sum p(m)_{(-1)}\otimes p(p(m)_{(0)})\\& + \sum P(p(m)_{(-1)})\otimes p(m)_{(0)} +\lambda \sum p(m)_{(-1)}\otimes p(m)_{(0)}
  \end{align*}
  for any $m\in M$.

  \item Let $(M,p_M,\delta_M)$ and $(N,p_N,\delta_N)$ be two $(C,\Delta,P)$-comodules. A \emph{homomorphism $f:(M,p_M,\delta_M)\rightarrow (N,p_N,\delta_N)$ of Rota-Baxter modules} is a homomorphism $f:M\rightarrow N$ of $C$-comodules such that $f\circ p_M=p_N\circ f$. Denote $\mathrm{Hom}_{(C,\Delta, P)}(M,N)$ for the set of all $(C,\Delta,P)$-comodule homomorphisms, which is a subspace of $\mathrm{Hom}_C(M,N)$ the space of all $C$-comodule homomorphisms.

\end{enumerate}\end{definition}

\begin{remark}(1) In \cite{GL}, the notion of Rota-Baxter module over a Rota-Baxter algebra is introduced. More precisely, a left Rota-Baxter module $(M, p)$ over a Rota-Baxter algebra $(R,P)$ of weight $\lambda$ is an $R$-module $M$ together with a linear endomorphism $p$ of $M$ such that $$P(a)p(x)=p(ap(x))+p(P(a)x)+\lambda p(ax),$$for any $a\in R$ and $x\in M$. One can see immediately that our notion of Rota-Baxter comodule is the dual of Rota-Baxter modules.

(2) One can define similarly the notion of right Rota-Baxter comodule over a Rota-Baxter coalgebra as follows. It is a right $C$-comodule $(M, \rho)$ together with an endomorphism $p$ of $M$ such that
  \begin{equation}\label{Right comodules}(p\otimes P)\rho=(p\otimes \mathrm{id})\rho p+ (\mathrm{id}\otimes P)\rho p+\lambda \rho p.\end{equation}

  (3) We have shown that for any Rota-Baxter coalgebra $(C,\Delta, P)$ of weight $\lambda$, the triple $(C,\Delta, \overline{P})$ with $\overline{P}=-\lambda \mathrm{id}_C-P$ is also a Rota-Baxter coalgebra of weight $\lambda$. Similarly, if $(M,p,\delta)$ is a $(C,\Delta,P)$-comodule, then
$(M,\overline{p},\delta)$ is a $(C,\Delta,\overline{P})$-comodule with $\overline{p}=-\lambda\mathrm{id}_M-p$. Furthermore, if $f:(M, p_M,\delta_M)\rightarrow (N,p_N,\delta_N)$ is a $(C,\Delta,P)$-comodule, then $f$ is also a $(C,\Delta,\overline{P})$-comodule homomorphism from $(M,\overline{p}_M,\delta_M)$ to $(N,\overline{p}_N,\delta_N)$.\end{remark}

\begin{example}\label{Example of RB comodules}A Rota-Baxter coalgebra $(C,\Delta,P)$ is naturally a left or right Rota-Baxter comodule over $(C,\Delta,P)$ whose comodule structure is given by the coproduct.
And any $C$-comodule $(M,\delta)$ is automatically a Rota-Baxter comodule over $(C,\Delta,P)$ with $p_M=0$.
\end{example}

Now let $(M,p,\delta)$ be a $(C,\Delta,P)$-comodule. We define a new linear map  $\delta_p: M\rightarrow C\otimes M$ by
\begin{align*}
\delta_p=(P\otimes \mathrm{id}+\mathrm{id}\otimes p+\lambda \mathrm{id}\otimes \mathrm{id})\delta.
\end{align*}

Using the same argument as in the proof of Proposition \ref{prop delta_P}, we can get the following proposition.

\begin{proposition}
  Let $(C,\Delta,P)$ be a Rota-Baxter coalgebra and $(M,p,\delta)$ a $(C,\Delta,P)$-comodule.
  Then $$\delta_p p=(P\otimes p)\delta $$ and
  $(M,\delta_p, p)$ is a Rota-Baxter $(C,\Delta_P,P)$-comodule.

 % we have
%  \begin{enumerate}
%
%    \item $(M,\delta_p)$ is a non-counitary $(C,\Delta_P)$-comodule. %, denoted by $M^{(p)}$.
%
%    \item the map $p: M\rightarrow M^{(p)}$ is a comodule map i.e., it satisfies $\delta_p p=(P\otimes p)\delta $.
%
%    \item $M^{(p)}$ is a Rota-Baxter $C^{(P)}$-comodule.
%  \end{enumerate}

\end{proposition}

\begin{definition}Let $(M,p,\delta)$ be a left Rota-Baxter comodule over a Rota-Baxter coalgebra $(C,\Delta,P)$. A linear subspace $N$ of $M$ is called a \emph{Rota-Baxter subcomodule} if $\delta(N)\subset C\otimes N$ and $p(N)\subset N$. \end{definition}

It is easy to see that the quotient of a left Rota-Baxter comodule by a Rota-Baxter subcomodule is again a left Rota-Baxter comodule.

Now let us recall the notion of rational module. Given a coalgebra $C$ and a right $C$-comodule $(M,\rho)$, one has a $C^\ast$-module structure on $M$ as follows. For any $c^\ast\in C^\ast$ and $m\in M$, one defines \begin{equation}\label{rational module}c^\ast.m=\sum c^\ast(m_{(1)})m_{(0)},\end{equation}where $\rho(m)=\sum m_{(0)}\otimes m_{(1)}$. It is called the rational $C^\ast$-module of $(M,\rho)$ (see, e.g., Proposition 2.1.1 in \cite{Sw}).

\begin{proposition}Suppose that $(C,\Delta, P)$ is a Rota-Baxter coalgebra of weight $\lambda$ and $(M,p,\rho)$ is a right $(C,\Delta, P)$-comodule. If $p$ is a comodule morphism, then the rational $C^\ast$-module $M$, equipped with $p$, is a left Rota-Baxter module over the Rota-Baxter algebra $(C^\ast, P^\ast)$.\end{proposition}
\begin{proof}Since $p$ is a comodule morphism, we have $$\sum p(m)_{(0)}\otimes p(m)_{(1)}=\sum p(m_{(0)})\otimes m_{(1)},$$for any $m\in M$. Then for any $c^\ast\in C^\ast$, $$c^\ast.p(m)=\sum c^\ast(m_{(1)})p(m_{(0)}).$$It follows from the equation (\ref{Right comodules}) that   \begin{align*}
\lefteqn{\sum p(m_{(0)})\otimes P(m_{(1)})}\\[3pt]
&=\sum p(p(m)_{(0)})\otimes p(m)_{(1)} + \sum p(m)_{(0)}\otimes P(p(m)_{(1)}) +\lambda \sum p(m)_{(0)}\otimes p(m)_{(1)}\\[3pt]
&=\sum p(p(m_{(0)}))\otimes m_{(1)} + \sum p(m_{(0)})\otimes P(m_{(1)}) +\lambda \sum p(m_{(0)})\otimes m_{(1)}.
  \end{align*}Therefore \begin{align*}
\lefteqn{p(c^\ast .p(m))+p(P^\ast(c^\ast).m)+\lambda p(c^\ast.m)}\\[3pt]
&=p\Big(\sum c^\ast(m_{(1)})p(m_{(0)})\Big)+p\Big(\sum c^\ast(P(m_{(1)}))m_{(0)}\Big)+\lambda p\Big(\sum c^\ast(m_{(1)})m_{(0)}\Big)\\[3pt]
&=\sum c^\ast(m_{(1)})p(p(m_{(0)}))+\sum c^\ast(P(m_{(1)}))p(m_{(0)})+\lambda \sum c^\ast(m_{(1)})p(m_{(0)})\\[3pt]
&=\sum c^\ast(P(m_{(1)}))p(m_{(0)})\\[3pt]
&=\sum P^\ast(c^\ast)(m_{(1)})p(m_{(0)})\\[3pt]
&=P^\ast(c^\ast).p(m).
\end{align*}
\end{proof}

\begin{remark}It is easy to see that the Rota-Baxter comodules in Example \ref{Example of RB comodules} verify the conditions in the above proposition.\end{remark}

\section{Appendix}

In this appendix, we present a detailed proof of Example \ref{Non-idempotent RB coalgebra}. The
techniques used here are similar to \cite{AGKK}.

It is easy to see that $\varepsilon$ is a counit of $C$. Before we
check the coassociativity, we need the following lemma.

\begin{lemma}\label{lemma1} For integers $n,k,l,j$ with $0\leq k,l,j\leq n$, we have
\begin{equation}\label{id4}
\sum_{i=0}^{n}\binom{j}{n-i}\binom{i}{l}\binom{l}{i-k}
=\sum_{i=0}^{n}\binom{n-j}{n-i}\binom{i}{n-k}\binom{j}{i-l}.
\end{equation}
\end{lemma}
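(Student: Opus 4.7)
The plan is to prove this identity by reducing both sides to a common double-sum form using only two classical tools: the trinomial revision identity $\binom{a}{b}\binom{b}{c}=\binom{a}{c}\binom{a-c}{b-c}$ and the Chu--Vandermonde convolution $\sum_s \binom{m}{s}\binom{n}{p-s}=\binom{m+n}{p}$.

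First I would simplify the left-hand side. Applying trinomial revision in the form $\binom{i}{l}\binom{l}{i-k}=\binom{i}{k}\binom{k}{i-l}$ and making the substitution $m=i-l$, the LHS becomes $\sum_m \binom{j}{n-l-m}\binom{l+m}{k}\binom{k}{m}$. Splitting $\binom{l+m}{k}$ via Vandermonde, applying trinomial revision to $\binom{k}{m}\binom{m}{k-r}$, shifting the inner index, and collapsing the resulting inner Vandermonde sum reduces the LHS to
\[
\sum_r \binom{l}{r}\binom{k}{r}\binom{r+j}{\alpha},\qquad\alpha:=k+l+j-n.
\]
A parallel computation on the right-hand side --- substituting $m=i-l$, expanding $\binom{l+m}{n-k}$ by Vandermonde, applying trinomial revision to $\binom{j}{m}\binom{m}{n-k-r}$, and collapsing by Vandermonde --- reduces the RHS to
\[
\sum_r \binom{l}{r}\binom{k+r}{l}\binom{j}{n-k-r}.
\]

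The final step is to show that both reduced sums equal the common expression $\sum_s \binom{j}{\alpha-s}\binom{k}{s}\binom{k+l-s}{l-s}$. For the LHS, I would expand $\binom{r+j}{\alpha}$ by Vandermonde, swap the order of summation, apply trinomial revision to $\binom{k}{r}\binom{r}{s}$, and collapse the remaining inner sum by Vandermonde. For the RHS, I would use the elementary rewriting $\binom{l}{r}\binom{k+r}{l}=\binom{k+r}{r}\binom{k}{l-r}$ (both equal the trinomial coefficient $\tfrac{(k+r)!}{r!(l-r)!(k+r-l)!}$), then substitute $s=l-r$ and observe $\binom{j}{n-k-l+s}=\binom{j}{\alpha-s}$.

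The main obstacle will be pure bookkeeping: keeping track of index shifts, verifying that each Vandermonde application is set up with the correct range of summation, and managing the simultaneous substitutions in the two parallel reductions. An alternative route --- induction on $n$ via Pascal's rule --- seems less clean because both sides depend on $n$ through several binomial coefficients simultaneously, whereas the double-sum reduction proceeds by uniform, mechanical applications of Vandermonde on each side.
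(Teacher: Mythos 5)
Your proposal is correct, but it takes a genuinely different route from the paper. The paper proves the identity by induction on $n$: after stating the convention $\binom{x}{y}=0$ for $y<0$ or $y>x\geq 0$, it disposes of the boundary cases $k=n$, $l=n$, $j=n$, $j=0$ directly via Vandermonde, and then uses Pascal's rule to show that both $L(n,k,l,j)$ and $R(n,k,l,j)$ satisfy the same four-term recurrence
$F(n,k,l,j)=F(n-1,k,l,j-1)+F(n-1,k-1,l,j-1)+F(n-1,k,l-1,j-1)+F(n-1,k-1,l-1,j-1)$.
You instead reduce both sides, non-inductively, to the common closed form $\sum_s\binom{j}{\alpha-s}\binom{k}{s}\binom{k+l-s}{l-s}$ with $\alpha=k+l+j-n$; I checked the intermediate forms $\sum_r\binom{l}{r}\binom{k}{r}\binom{r+j}{\alpha}$ and $\sum_r\binom{l}{r}\binom{k+r}{l}\binom{j}{n-k-r}$ and the final identification, and each step is a legitimate instance of trinomial revision or Chu--Vandermonde (your opening rewriting $\binom{i}{l}\binom{l}{i-k}=\binom{i}{k}\binom{k}{i-l}$ is in fact also used by the paper in its $j=n$ case). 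What your approach buys is a structural explanation of why the two sides agree --- they share a manifestly common canonical form --- and it avoids both the base cases and the double derivation of the recurrence; what it costs is that every Vandermonde split and trinomial revision must be checked against the vanishing conventions for out-of-range binomials (e.g.\ the factor $\binom{j-n+k+r}{\,\cdot\,}$ in your right-hand reduction has a possibly negative upper index, and the Vandermonde collapse there is only legitimate because the accompanying factor $\binom{j}{n-k-r}$ vanishes exactly when that happens). The paper's induction is more mechanical but keeps every binomial's upper index visibly nonnegative throughout. Either argument is acceptable; yours would need the bookkeeping you flagged to be written out in full before it is a complete proof.
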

\begin{proof}
For brevity, write the left-hand side  and right-hand side of  \eqref{id4} as $L(n,k,l,j)$ and $R(n,k,l,j)$ respectively. We use induction on $n$ with $n\geq 0$.
When $n=0$, then $k=l=j=0$, we can check directly that $$L(0,0,0,0)=R(0,0,0,0)=1.$$ Assume that the equation $L(n-1,k,l,j)=R(n-1,k,l,j)$ holds for any integers $k,l,j,n$ with $0\leq k,l,j\leq n-1$.
Now consider the case for $0\leq k,l,j\leq n$. First we prove some special cases. When $k=n$, we have
\begin{align*}
L(n,n,l,j)=\sum_{i=0}^{n}\binom{j}{n-i}\binom{i}{l}\binom{l}{i-n} =\binom{j}{0}\binom{n}{l}\binom{l}{0}=\binom{n}{l},
\end{align*}
and $$R(n,n,l,j)=\sum_{i=0}^{n}\binom{n-j}{n-i}\binom{j}{i-l}
=\sum_{i=0}^{n}\binom{n-j}{i}\binom{j}{n-l-i}.$$ Using the
classical Vandermonde's identity, for any integers $x,y,z\geq 0$,
$$\sum_{t=0}^{x}\binom{x}{t}\binom{y}{z-t}=\binom{x+y}{z},$$ we
get $$R(n,n,l,j)=\binom{n}{n-l}=L(n,n,l,j).$$ When   $l=n$, we
have
$$L(n,k,n,j)=\sum_{i=0}^{n}\binom{j}{n-i}\binom{i}{n}\binom{n}{i-k} =\binom{j}{0}\binom{n}{n}\binom{n}{n-k}=\binom{n}{k},$$
and
$$R(n,k,n,j)=\sum_{i=0}^{n}\binom{n-j}{n-i}\binom{i}{n-k}\binom{j}{i-n} =\binom{n}{n-k}=L(n,k,n,j).$$
When $j=n$, since
$$\displaystyle\binom{i}{l}\binom{l}{i-k}=\binom{i}{k}\binom{k}{i-l}
~~\text{and}~~
\binom{n}{i}\binom{i}{k}=\binom{n}{k}\binom{n-k}{n-i}, $$ using
the vandermonde's identity again, we have
$$L(n,k,l,n)=\sum_{i=0}^{n}\binom{n}{n-i}\binom{i}{k}\binom{k}{i-l} =\sum_{i=0}^{n}\binom{n}{k}\binom{n-k}{n-i}\binom{k}{i-l} =\binom{n}{k}\binom{n}{l},$$
and
$$R(n,k,l,n)=\sum_{i=0}^{n}\binom{0}{n-i}\binom{i}{n-k}\binom{n}{i-l} =\binom{n}{n-k}\binom{n}{n-l}=L(n,k,l,n).$$
When $j=0$, we have
$$L(n,k,l,0)=\sum_{i=0}^{n}\binom{0}{n-i}\binom{i}{l}\binom{l}{i-k} =\binom{n}{l}\binom{l}{n-k},$$
and $$R(n,k,l,0)=\sum_{i=0}^{n}\binom{n}{n-i}\binom{i}{n-k}\binom{0}{i-l} =\binom{n}{n-l}\binom{l}{n-k}=L(n,k,l,0).$$

Next we will use the Pascal's rule  $$\binom{x}{y}=\binom{x-1}{y-1}+\binom{x-1}{y}$$
where $x,y$ are integers with $x\geq y\geq 0$.

For $0\leq k,l<n$ and $0<j<n$,  we have
\begin{align*}
&L(n,k,l,j)=\sum_{i=0}^n\binom{j}{n-i}\binom{i}{l}\binom{l}{i-k} =\sum_{i=1}^n\binom{j}{n-i}\binom{i}{l}\binom{l}{i-k} \\ &=\sum_{i=1}^{n}\Big[\binom{j-1}{n-i}+\binom{j-1}{n-i-1}\Big] \binom{i}{l}\binom{l}{i-k}\\
&=\sum_{i=1}^{n}\binom{j-1}{n-1-(i-1)}\binom{i-1+1}{l}\binom{l}{i-1-(k-1)}\\ &\quad+\sum_{i=1}^{n}\binom{j-1}{n-i-1}\binom{i}{l}\binom{l}{i-k}\\
&=\sum_{i=0}^{n-1}\binom{j-1}{n-1-i}\binom{i+1}{l}\binom{l}{i-(k-1)} +\sum_{i=0}^{n-1}\binom{j-1}{n-1-i}\binom{i}{l}\binom{l}{i-k}\\
&=\sum_{i=0}^{n-1}\binom{j-1}{n-1-i} \Big[\binom{i}{l-1}+\binom{i}{l}\Big] \binom{l}{i-(k-1)}+L(n-1,k,l,j-1)\\
&=\sum_{i=0}^{n-1}\binom{j-1}{n-1-i} \binom{i}{l-1} \Big[\binom{l-1}{i-k}+\binom{l-1}{i-(k-1)}\Big] \\&\quad +L(n-1,k-1,l,j-1)+L(n-1,k,l,j-1)\\
&=L(n-1,k,l-1,j-1)+L(n-1,k-1,l-1,j-1)\\&\quad +L(n-1,k-1,l,j-1)+L(n-1,k,l,j-1),
  \end{align*}
and \begin{align*}
&R(n,k,l,j)=\sum_{i=0}^{n}\binom{n-j}{n-i}\binom{i}{n-k}\binom{j}{i-l} =\sum_{i=1}^{n}\binom{n-j}{n-i}\binom{i}{n-k}\binom{j}{i-l}\\
&=\sum_{i=1}^{n}\binom{n-1-(j-1)}{n-1-(i-1)}\binom{i-1+1}{n-1-(k-1)} \binom{j-1+1}{i-1-(l-1)}\\
&=\sum_{i=0}^{n-1}\binom{n-1-(j-1)}{n-1-i}\binom{i+1}{n-1-(k-1)} \binom{j-1+1}{i-(l-1)}\\
&=\sum_{i=0}^{n-1}\binom{n-1-(j-1)}{n-1-i} \Big[\binom{i}{n-1-k}+\binom{i}{n-1-(k-1)}\Big] \\ &\quad\times\Big[\binom{j-1}{i-l}+\binom{j-1}{i-(l-1)}\Big]\\
&=R(n-1,k,l,j-1)+R(n-1,k-1,l,j-1)+R(n-1,k,l-1,j-1)\\&\quad+R(n-1,k-1,l-1,j-1).
\end{align*}
By induction, we obtain $L(n,k,l,j)=R(n,k,l,j)$. This proves the lemma.
\end{proof}

Note that for integers $x,y$, we have
\begin{equation}
  \label{fact1}\binom{x}{y}=0\quad \text{if}~x\geq 0>y ~\text{or}~y>x\geq 0.
\end{equation}
Then we can write
$$\Delta(c_n)=\sum_{j=0}^{n}\sum_{i=0}^{n}(-1)^{i+j+n} \binom{n}{j}\binom{j}{n-i}c_i\otimes c_j.$$Now we are ready to check the coassociativity of $(C,\Delta, \varepsilon)$. We have
\begin{align*}
&(\Delta\otimes \mathrm{id})\Delta(c_n) =\sum_{j=0}^{n}\sum_{i=0}^{n}(-1)^{i+j+n} \binom{n}{j}\binom{j}{n-i}\Delta(c_i)\otimes c_j\\
&=\sum_{j=0}^{n}\sum_{i=0}^{n}(-1)^{i+j+n} \binom{n}{j}\binom{j}{n-i} \Big(\sum_{l=0}^{n}\sum_{k=0}^{n}(-1)^{i+k+l} \binom{i}{l}\binom{l}{i-k}c_k\otimes c_l\Big)\otimes c_j\\
&=\sum_{j=0}^{n}\sum_{i=0}^{n}\sum_{l=0}^{n}\sum_{k=0}^{n}(-1)^{j+n+k+l} \binom{n}{j}\binom{j}{n-i}\binom{i}{l}\binom{l}{i-k} c_k\otimes c_l\otimes c_j,
\end{align*}
and
\begin{align*}
&(\mathrm{id}\otimes\Delta)\Delta(c_n) =\sum_{i=0}^{n}\sum_{k=0}^{n}(-1)^{k+i+n} \binom{n}{i}\binom{i}{n-k}c_k\otimes\Delta(c_i)\\
&=\sum_{i=0}^{n}\sum_{k=0}^{n}(-1)^{k+i+n} \binom{n}{i}\binom{i}{n-k} c_k\otimes\Big(\sum_{j=0}^{n}\sum_{l=0}^{n}(-1)^{j+i+l} \binom{i}{j}\binom{j}{i-l}c_l\otimes c_j\Big) \\
&=\sum_{i=0}^{n}\sum_{k=0}^{n}\sum_{j=0}^{n} \sum_{l=0}^{n}(-1)^{j+n+k+l} \binom{n}{i}\binom{i}{n-k}\binom{i}{j} \binom{j}{i-l}c_k\otimes c_l\otimes c_j\\
&=\sum_{i=0}^{n}\sum_{k=0}^{n} \sum_{j=0}^{n}\sum_{l=0}^{n}(-1)^{j+n+k+l} \binom{n}{j}\binom{n-j}{n-i} \binom{i}{n-k}\binom{j}{i-l}c_k\otimes c_l\otimes c_j.
\end{align*}
The second equalities in
the above two equations are consequences of the fact \eqref{fact1}.  The last equality follows from
 $$\binom{n}{i}\binom{i}{j}=\binom{n}{j}\binom{n-j}{n-i}.$$
Due to Lemma \ref{lemma1}, the coassociativity $(\Delta\otimes \mathrm{id})\Delta=(\mathrm{id}\otimes\Delta)\Delta$ holds. Hence the coalgebra $(C,\Delta,\varepsilon)$ is well-defined.

Next we check that $P$ is a Rota-Baxter operator of weight $-1$ on $C$.
Since
$$\Delta   P(c_n) =\Delta(c_{n-1}) =\sum_{j=0}^{n-1}\sum_{i=0}^{n-1}(-1)^{i+j+n-1} \binom{n-1}{j}\binom{j}{n-1-i}c_i\otimes c_j,$$
 then \begin{align*}
&(\mathrm{id}\otimes P+P\otimes \mathrm{id}-\mathrm{id}\otimes \mathrm{id})  \Delta  P(c_n) \\ &=\sum_{j=1}^{n-1}\sum_{i=0}^{n-1}(-1)^{i+j+n-1} \binom{n-1}{j}\binom{j}{n-1-i}c_i\otimes c_{j-1}\\
&\quad+\sum_{j=0}^{n-1}\sum_{i=0}^{n-1}(-1)^{i+j+n-1} \binom{n-1}{j}\binom{j}{n-1-i}c_{i-1}\otimes c_j\\
&\quad-\sum_{j=0}^{n-1}\sum_{i=0}^{n-1}(-1)^{i+j+n-1} \binom{n-1}{j}\binom{j}{n-1-i}c_i\otimes c_j\\
&=\sum_{j=0}^{n-2}\sum_{i=0}^{n-1}(-1)^{i+j+n} \binom{n-1}{j+1}\binom{j+1}{n-1-i}c_i\otimes c_{j}\\
&\quad+\sum_{j=0}^{n-1}\sum_{i=0}^{n-2}(-1)^{i+j+n} \binom{n-1}{j}\binom{j}{n-2-i}c_{i}\otimes c_j\\
&\quad+\sum_{j=0}^{n-1}\sum_{i=0}^{n-1}(-1)^{i+j+n} \binom{n-1}{j}\binom{j}{n-1-i}c_i\otimes c_j\\
&=\sum_{j=0}^{n-1}\sum_{i=0}^{n-1}(-1)^{i+j+n} \Big\{ \binom{n-1}{j+1}\Big[\binom{j}{n-1-i}+\binom{j}{n-2-i)}\Big]\\ &\quad+\binom{n-1}{j}\binom{j}{n-2-i}+\binom{n-1}{j}\binom{j}{n-1-i}\Big\}
c_i\otimes c_j\\
&=\sum_{j=0}^{n-1}\sum_{i=0}^{n-1}(-1)^{i+j+n} \Big[\binom{n-1}{j+1}+\binom{n-1}{j}\Big] \Big[\binom{j}{n-1-i}\binom{j}{n-2-i}\Big]   c_i\otimes c_j\\
&=\sum_{j=0}^{n-1}\sum_{i=0}^{n-1}(-1)^{i+j+n} \binom{n}{j+1}\binom{j+1}{n-1-i} c_i\otimes c_j\\
&=\sum_{j=0}^{n}\sum_{i=0}^{n}(-1)^{i+j+n} \binom{n}{j}\binom{j}{n-i}c_{i-1}\otimes c_{j-1}=(P\otimes P) \Delta(c_n).
\end{align*}
Therefore $(C,\Delta,\varepsilon,P)$ is a Rota-Baxter coalgebra of
weight $-1$.

\section*{Acknowledgements}The authors would like to thank Marc Rosso
for the helpful discussions about coalgebras. This work was
partially supported by National Natural Science Foundation of
China (Grant No. 11201067, No. 11931009, No. 11871249,  No. 11871326).

\bibliographystyle{amsplain}

\end{document}